\newcommand{\R}{{\mathbb R}}
\newcommand{\od}{:=}
\newcommand{\U}{\mathcal{U}}
\newcommand{\C}{\mathcal{C}}
\newcommand{\D}{\mathcal{D}}
\newcommand{\F}{\mathbb{F}}
\newcommand{\CF}{\mathcal{CF}}
\newcommand{\V}{\mathcal V}
\newtheorem{theorem}{Theorem}[section]
\newtheorem{lemma}[theorem]{Lemma}
\newtheorem{corollary}[theorem]{Corollary}
\theoremstyle{definition}
\newtheorem{definition}[theorem]{Definition}
\theoremstyle{remark}
\newtheorem{example}[theorem]{Example}
\DeclareMathOperator{\rest}{rest}
\DeclareMathOperator{\supp}{supp}
\begin{document}
\title{Neural Ideal Preserving Homomorphisms}
%

\author{R. Amzi Jeffs}
\address{Department of Mathematics.  Harvey Mudd College, Claremont, CA 91711}
\email{rjeffs@g.hmc.edu}

\author{Mohamed Omar}
\address{Department of Mathematics.  Harvey Mudd College, Claremont, CA 91711}
\email{omar@g.hmc.edu}

\author{Nora Youngs}
\address{Department of Mathematics.  Colby College, Waterville, ME}
\email{nora.youngs@colby.edu}

\subjclass[2010]{05C15, 05C30, 05C31}


\begin{abstract}
The neural ideal of a binary code $\C\subseteq \F_2^n$ is an ideal in $\F_2[x_1,\ldots, x_n]$ closely related to the vanishing ideal of $\C$. 
The neural ideal, first introduced by Curto et al, provides an algebraic way to extract geometric properties of realizations of binary codes. In this paper we investigate homomorphisms between polynomial rings $\F_2[x_1,\ldots, x_n]$ which preserve all neural ideals. We show that all such homomorphisms can be decomposed into a composition of three basic types of maps. Using this decomposition, we can interpret how these homomorphisms act on the underlying binary codes. We can also determine their effect on geometric realizations of these codes using sets in $\R^d$. We also describe how these homomorphisms affect a canonical generating set for neural ideals, yielding an efficient method for computing these generators in some cases.

\end{abstract}

\thanks{}
\date{\today}
\maketitle

\section{Introduction}
\label{sec:intro}

Given a binary code $\C\subseteq \F_2^n$, a \emph{realization} of $\C$ is a collection of sets $\mathcal U = \{U_1,\ldots, U_n\}$ in $\R^d$ such that $\C$ records intersection patterns of the sets in $\mathcal U$. More precisely, an element $c$ is in $\C$ exactly when there is a point in $\R^d$ lying in only the $U_i$'s with $c_i = 1$. Every code has a realization in $\R^d$ for $d\ge 1$ if we place no restrictions on the sets in $\mathcal U$. However, if we require that the sets in $\mathcal U$ satisfy certain geometric properties such as connectedness or convexity not every code has a realization. Such restrictions naturally arise in neuroscientific contexts, as addressed in \cite{neuralring13}.

The question of when a code is realizable is difficult to answer in these restricted settings; however, algebraic techniques have been succesfully used to gain insight into obstructions to codes  being realizable.
Given a code $\C$, the authors in \cite{neuralring13} define the  {\it neural ideal} $J_\C\subseteq \F_2[x_1,\ldots, x_n]$ and the associated {\it neural ring}, which algebraically process and store the information in a neural code. Each neural ideal $J_\C$ has a particular set of generators, its \emph{canonical form}, which reveals the fundamental relationships between sets in any realization of $\C$.

In recent work \cite{mapsbetweencodes}, the authors use the ideal-variety correspondence to relate homomorphisms between neural rings to maps between their associated codes, and focus particularly on those maps between codes which have a natural interpretation in neuroscience. We examine ring homomorphisms from $\F_2[x_1,...,x_n]$ to $\F_2[x_1,...,x_m]$ which {\it preserve} neural ideals in the sense that the image of any neural ideal is again a neural ideal. In Theorem \ref{thm:neuralhomomorphism} we completely classify these homomorphisms, and give a method of decomposing any such homomorphism into a composition of three basic types. Theorem \ref{thm:codeeffects} gives a precise description of how these homomorphisms act on the underlying codes of neural ideals. Theorem \ref{thm:geometry} describes how these maps transform realizations of the underlying codes. 

The structure of the paper is as follows. In Section \ref{sec:prelim}, we provide pertinent definitions and describe in detail the main results of our work.  Sections \ref{sec:algebraproof} and  \ref{sec:codeproof}  are devoted to proving Theorems \ref{thm:neuralhomomorphism} and \ref{thm:codeeffects} respectively. In Section \ref{sec:geometryproof} we prove Theorem \ref{thm:geometry}.   In Section \ref{sec:CFproof} we build on Theorem \ref{thm:neuralhomomorphism} by describing how homomorphisms preserving neural ideals affect the canonical form of neural ideals.

\section{Preliminaries and Main Results}
\label{sec:prelim}
We start by presenting the fundamental definitions for the theory we will be exploring, beginning with the formal definition of a neural code. \begin{definition}\label{def:code}
A \emph{code} or \emph{neural code} is a set $\C\subseteq \F_2^n$ of binary vectors. The vectors in $\C$ are called \emph{codewords}. 
\end{definition}

Throughout we will let $[n]$ denote the set $\{1,2,\ldots , n\}$. For any vector $v\in \F_2^n$ the \emph{support} of $v$ is the set $\supp(v) := \{i\in [n] \mid v_i = 1\}$. 

\begin{definition}\label{def:codeofsets}
Let $\U = \{U_1,\ldots, U_n\}$ be a collection of sets in a space $X\subseteq \R^d$. The \emph{code of $\U$} is the code defined by \[
\C(\U) \od \left\{ v\in \F_2^n \middle| \left(\bigcap_{v_i = 1} U_i\right) \scalebox{2}{$ \setminus$} \left(\bigcup_{v_j = 0} U_j\right) \neq \emptyset  \right\},
\]
where we adopt the convention that the empty intersection is $X$ and the empty union is $\emptyset$.
Given $\C\subset\F_2^n$, we say $\C$ is \emph{realizable} if there exists a collection $\U$ such that $\C = \C(\U)$, and call $\U$ a \emph{realization} of the code in the space $X$. If a code $\C$ has a realization consisting of convex open sets  then we say $\C$ is a \emph{convex code}.
\end{definition}

Classifying which codes are convex is an open problem which has been considered by many researchers \cite{chadvlad, neuralring13, tancer} .  Many partial results exist along with tools for approaching this task. In this paper we build on algebraic tools introduced in \cite{neuralring13} that have been pivotal in understanding obstructions to
convex realizability of codes; see for example \cite{chadvlad,local15,obstructions}. As a code $\C$ is a subset of $\F_2^n$, we will work over the polynomial ring $\F_2[x_1,\ldots, x_n]$;  for convenience we will use $\F_2[n]$ to denote $\F_2[x_1,\ldots, x_n]$. 

\begin{definition}[\cite{neuralring13}]\label{def:pseudomonomial}
A \emph{pseudomonomial} is a polynomial $f\in \F_2[n]$ of the form\[
f = \prod_{i\in\sigma} x_i \prod_{j\in\tau} (1-x_j).
\]
where $\sigma,\tau\subseteq[n]$ and $\sigma\cap \tau=\emptyset$. 
\end{definition}

Note that every pseudomonomial has degree at most $n$. For any vector $v\in \F_2^n$, its \emph{indicator pseudomonomial}, denoted  $\rho_v$, is the degree $n$ pseudomonomial with $\sigma = \supp(v)$ and $\tau = [n]\setminus \supp(v)$.
 Using indicator pseudomonomials, the authors in \cite{neuralring13} construct a unique ideal associated to a binary code, called the \emph{neural ideal}.

\begin{definition}[\cite{neuralring13}]\label{def:neuralideal}
Let $\C\subseteq \F_2^n$ be a neural code. The \emph{neural ideal of }$\C$, denoted $J_\C$, is the ideal \[
J_\C \od \langle \rho_v \mid v\notin \C \rangle.
\]
Here we adopt the convention that the ideal generated by the empty set is the zero ideal. An ideal in $\F_2[n]$ is called a \emph{neural ideal} if it is equal to $J_\C$ for some code $\C$.
\end{definition}

An ideal in $\F_2[n]$ is a neural ideal if and only if it has a generating set consisting of pseudomonomials \cite{thesis}. This characterization of neural ideals will be useful in proving that certain homomorphisms preserve neural ideals, and in classifying all such homomorphisms. We now formally introduce our objects of interest: the class of homomorphisms which preserve neural ideals.

\begin{definition}\label{def:neuralhomomorphism}
Let $\phi:\F_2[n] \to \F_2[m]$ be a homomorphism of rings. We say that $\phi$ \emph{preserves neural ideals} if the image of any neural ideal in $\F_2[n]$ under $\phi$ is a neural ideal in $\F_2[m]$. That is, for every code $\C\subseteq \F_2^n$ there is a code $\D\subseteq \F_2^m$ so that \[
\phi(J_\C) = J_\D.
\]
\end{definition}

 Because neural ideals in $\F_2[n]$ and codes on $n$ bits are in bijective correspondence we can think of such a map as defining a process for transforming codes on $n$ bits to codes on $m$ bits. Note immediately that composing any two maps which preserve neural ideals yields a map which again preserves neural ideals.

\begin{example}\label{ex:respecting}
Consider the surjective map $\phi:\F_2[6]\to\F_2[3]$ defined by \begin{align*}
x_1\mapsto &\, 0 & x_4\mapsto &\, 1-x_1\\
x_2\mapsto &\, 1-x_3 & x_5\mapsto &\, 1\\
x_3\mapsto &\, 1 & x_6\mapsto &\, x_2\\
\end{align*}
extended algebraically to all of $\F_2[6]$.  Let $J$ be the neural ideal generated by $\{x_1x_2(1-x_3), x_4(1-x_2), x_5x_6(1-x_1)\}$. If we apply $\phi$ to this set of generators then we obtain a set of generators for the ideal $\phi(J)$:\[
\phi(J) = \langle 0, (1-x_1)x_3, x_2\rangle = \langle x_3(1-x_1), x_2\rangle.
\]
Since $\phi(J)$ is generated by pseudomonomials it is a neural ideal. We will see shortly as a result of Theorem \ref{thm:neuralhomomorphism} that the image of \emph{any} neural ideal under $\phi$ is again a neural ideal. 
\end{example}

The following definitions give three useful classes of maps that preserve neural ideals.  For the proof that these maps preserve neural ideals, see Lemma \ref{lem:respectingmaps}. 

\begin{definition}\label{def:permutationmap}
Let $\lambda$ be a permutation of $[n]$. Then the map induced by sending $x_i$ to $x_{\lambda(i)}$ is called a \emph{permutation map} of $\F_2[n]$ and is denoted simply by $\lambda$.
\end{definition}

\begin{definition}\label{def:bitflip}
For any $i\in[n]$ the $i$-th \emph{bit flip} is the map $\delta_i:\F_2[n]\to\F_2[n]$ induced by \[
\delta_i(x_j) = \begin{cases} x_j & j \neq i \\
1- x_j & j = i.
\end{cases}
\] We will call any composition of such maps a \emph{bit flipping map}.
\end{definition}

\begin{definition}\label{def:restrictionmap} 
Let $1\le m\le m' \le n$ and define $\omega_{m,m'}:\F_2[n]\to \F_2[m]$ to be the map induced by \[
\omega_{m,m'}(x_i) = \begin{cases}
x_i & i\le m\\
0 & m< i \le m'\\
1 & m'< i \le n
\end{cases}
\]
Such a map is called a \emph{restriction map} from $\F_2[n]$ to $\F_2[m]$.
\end{definition}


Having described these three classes of neural ideal preserving homomorphisms we can state our first main result, which tells us that every neural ideal preserving homomorphism can be expressed as a composition of these maps.
\begin{theorem}\label{thm:neuralhomomorphism}
Let $\phi:\F_2[n]\to \F_2[m]$ be a homomorphism. Then $\phi$ preserves neural ideals if and only if $\phi$ can be expressed as a composition of the following types of maps:\begin{itemize}
\item[(i)] Bit flipping (Definition \ref{def:bitflip}), 
\item[(ii)] Permutation (Definition \ref{def:permutationmap}), and
\item[(iii)]  Restriction (Definition \ref{def:restrictionmap}).
\end{itemize}
Moreover, we can write $\phi = \omega\circ\lambda\circ\delta$ where $\delta$ is a bit flipping map, $\lambda$ is a permutation, and $\omega$ is a restriction.
\end{theorem}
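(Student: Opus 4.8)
The plan is to dispatch the ``if'' direction immediately and then concentrate on the converse together with the explicit factorization. For the ``if'' direction: Lemma~\ref{lem:respectingmaps} says each of the three basic maps preserves neural ideals, and, as remarked after Definition~\ref{def:neuralhomomorphism}, a composition of neural-ideal-preserving maps again preserves neural ideals; hence every composition of maps of types (i)--(iii) preserves neural ideals. So the content is the converse and the ``moreover''.

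The converse rests on one fact about $\F_2[m]$: since $\F_2[m]$ is a UFD in which every pseudomonomial is a squarefree product of the pairwise non-associate primes $x_1,\dots,x_m,1-x_1,\dots,1-x_m$, \emph{every divisor of a nonzero pseudomonomial is again a pseudomonomial}. From this I would first deduce that a neural-ideal-preserving $\phi$ sends every pseudomonomial $f\in\F_2[n]$ to a pseudomonomial or to $0$: indeed $\langle f\rangle$ is a neural ideal, so $\phi(\langle f\rangle)=J_\D=\langle g_1,\dots,g_k\rangle$ with the $g_\ell$ pseudomonomials; each $g_\ell$ lies in $\phi(\langle f\rangle)=\{\phi(h)\phi(f):h\in\F_2[n]\}$ and hence is a multiple of $\phi(f)$, so either $\phi(f)=0$ or $\phi(f)$ divides a nonzero pseudomonomial and the fact applies. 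Applying this with $f=x_i$ and $f=1-x_i$ shows that $\phi(x_i)$ and $1-\phi(x_i)$ are each a pseudomonomial or $0$; a short count over $\F_2^m$ — a degree-$d$ pseudomonomial equals $1$ at exactly $2^{m-d}$ points, so comparing $\phi(x_i)$ with $1-\phi(x_i)$ and using that $2^m-2^{m-d}$ is a power of $2$ only for $d\le 1$ forces $\deg\phi(x_i)\le 1$ — then yields $\phi(x_i)\in\{0,1\}\cup\{x_j:j\in[m]\}\cup\{1-x_j:j\in[m]\}$.

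To finish, set $S_j=\{i\in[n]:\phi(x_i)\in\{x_j,1-x_j\}\}$ for $j\in[m]$, together with $Z=\{i:\phi(x_i)=0\}$ and $O=\{i:\phi(x_i)=1\}$, so $[n]=Z\sqcup O\sqcup\bigsqcup_{j}S_j$. If some $S_j$ contained two distinct indices $i,k$, one of the four degree-two pseudomonomials in $x_i,x_k$ would map under $\phi$ to $x_j^2$ or $(1-x_j)^2$, which is neither $0$ nor a pseudomonomial — contradicting the previous paragraph — so $|S_j|\le 1$. Next, $\phi$ must be surjective: some $\phi(x_{i_0})$ is non-constant (otherwise $\operatorname{im}\phi=\F_2$, but then $\phi(\langle x_1\rangle)$ when $\phi(x_1)=1$, or $\phi(\langle 1-x_1\rangle)$ when $\phi(x_1)=0$, equals $\F_2$, which is not an ideal of $\F_2[m]$ for $m\ge 1$), and then $\phi(\langle x_{i_0}\rangle)=(\operatorname{im}\phi)\cdot\phi(x_{i_0})$ is an ideal of the domain $\F_2[m]$ only if $\operatorname{im}\phi=\F_2[m]$. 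Surjectivity forces every $S_j$ to be nonempty, so $|S_j|=1$ for all $j\in[m]$. One then reads off the factorization: let $\delta$ flip exactly the bits in $\{i:\phi(x_i)\in\{1-x_j:j\in[m]\}\}$; let $\lambda$ be any permutation of $[n]$ carrying the element of $S_j$ to $j$, the set $Z$ to $\{m+1,\dots,m+|Z|\}$, and $O$ to $\{m+|Z|+1,\dots,n\}$; and let $\omega=\omega_{m,\,m+|Z|}$, a legitimate restriction map since $m+|Z|=n-|O|\le n$. Checking on the generators $x_1,\dots,x_n$ gives $\phi=\omega\circ\lambda\circ\delta$.

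I expect the routine part to be this last reconstruction. The substance is the UFD divisibility fact and the ensuing classification of each $\phi(x_i)$ — in particular excluding pseudomonomial images of degree $\ge 2$ and showing that $\phi$ is automatically surjective, since both are needed to pin down the sets $S_j$ and thereby produce the restriction $\omega$.
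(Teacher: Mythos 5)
Your proposal is correct, and the overall skeleton (show $\phi$ takes pseudomonomials to pseudomonomials or $0$, pin down $\phi(x_i)$, then read off $\delta,\lambda,\omega$) matches the paper's. But you arrive at the two key intermediate facts by genuinely different arguments. For surjectivity, the paper observes that $\F_2[n]=J_\emptyset$ is itself a neural ideal containing $1$, so its image is a neural ideal containing $1$ and hence all of $\F_2[m]$ --- a one-line argument. You instead argue from the image of a principal ideal $\langle x_{i_0}\rangle$ being an ideal, together with cancellation in the domain $\F_2[m]$, after first ruling out the all-constants case; this works but is noticeably heavier. For the bound $\deg\phi(x_i)\le 1$, the paper's Lemma~\ref{lem:lineartolinear} uses surjectivity first: at least $m$ of the $\phi(x_i)$ are non-constant, so $\prod_{i\in\sigma}\phi(x_i)$ is a nonzero pseudomonomial of degree $\ge m$, forcing exactly $m$ linear factors and a bijection $\sigma\to[m]$; this gives the degree bound, the injectivity ($|S_j|\le 1$), and the surjectivity onto indices ($|S_j|\ge 1$) all in one stroke. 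Your counting argument --- that a degree-$d$ pseudomonomial is $1$ on exactly $2^{m-d}$ points, so $\phi(x_i)$ and $1-\phi(x_i)$ can both be pseudomonomials only when $d\le 1$ --- is correct, self-contained, and does not presuppose surjectivity, but you then need the separate squarefreeness argument for $|S_j|\le 1$ and surjectivity for $|S_j|\ge 1$, so it costs more steps overall. The final reconstruction of $\delta$, $\lambda$, $\omega$ and the four-case verification coincide with the paper's. In short: same destination, same endgame, but your middle section is a more piecemeal replacement for the paper's Lemma~\ref{lem:neuralhomomorphism} plus Lemma~\ref{lem:lineartolinear}; the paper's route is shorter because it extracts surjectivity cheaply up front and then lets the single degree computation do triple duty.
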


The proof of Theorem \ref{thm:neuralhomomorphism} can be found in Section \ref{sec:algebraproof}. Theorem \ref{thm:neuralhomomorphism} provides a complete characterization of homomorphisms that preserve neural ideals. By decomposing these homomorphisms into three basic types we gain an understanding of their structure and obtain a compact way to describe any such homomorphism. 

\begin{example}\label{ex:respectingdecomposed}
Recall the map $\phi$ described in Example \ref{ex:respecting}. We can illustrate this map and its decomposition given by Theorem \ref{thm:neuralhomomorphism} by considering the action of $\phi$ on variables. This illustration is provided in Figure \ref{ex:decomposition}, where solid line segments connect a variable to its image (for example $x_1\mapsto 0$ or $x_6\mapsto x_2$) while dashed lines indicate mappings of the form $x_i\mapsto 1-x_j$.
\begin{figure}[h]\[\includegraphics[width=25em]{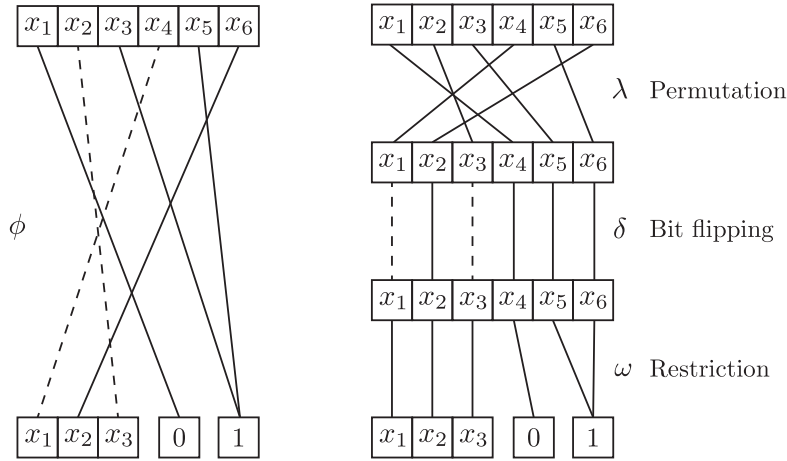}\]
\caption{The homomorphism $\phi$ written as the composition of permutation, bit flipping, and restriction maps. Dashed lines indicate places where $x_i\mapsto 1-x_j$. }\label{ex:decomposition}\end{figure}
\end{example}

We now describe three transformations of codes, which we will see are related naturally to the three types of homomorphisms given in Definitions \ref{def:permutationmap}, \ref{def:bitflip}, and \ref{def:restrictionmap}.

\begin{definition}\label{def:actiononcodes}
Let $\C$ be a code on $n$ bits.\begin{itemize}
\item[(i)] Let $\lambda$ be a permutation of $[n]$. We define the \emph{permutation of $\C$ by $\lambda$} to be the code \[
\lambda(\C) \od \{u\in \F_2^n \mid \supp(u) = \lambda(\supp(c)) \text{ for some } c\in C\}.
\]
\item[(ii)] For any $i\in[n]$ the $i$-th \emph{bit flip of $\C$}, denoted $\delta_i(\C)$, is the code on $n$ bits defined by \[
\delta_i(\C) \od \{u\in \F_2^n \mid \supp(u) = \supp(c) \oplus \{i\} \text{ for some } c\in \C\}.
\]
where $\oplus$ denotes symmetric difference.
\item[(iii)] Let $m$ and $m'$ be integers so that $1\le m \le m' \le n$, and let $\sigma = [n]\setminus [m']$. The \emph{restriction of $\C$ to $(m,m')$} is the code \[
\rest(\C,m,m') \od \{u\in \F_2^m  \mid \supp(u) \cup  \sigma = \supp(c) \text{ for some } c\in \C\}.
\]
\end{itemize}
\end{definition} 

Restricting codes generalizes the notion of taking the link of a face in a simplicial complex. In particular, when $m=m'$ and the supports of codewords in $\C$ forms an abstract simplicial complex $\Delta$, then the supports of vectors in $\rest(\C,m,m')$ form the simplicial complex $\mbox{Lk}_\sigma(\Delta)$ where $\sigma = [n]\setminus [m]$. This connection is important since taking links in the simplicial complex of a code can be used to understand the convexity of that code, as described in \cite{local15}. 

The following theorem allows us to translate precisely between homomorphisms preserving neural ideals and transformations of neural codes.

\begin{theorem}\label{thm:codeeffects} Let $\C$ be a neural code. Then\begin{itemize}
\item[(i)]  $\lambda(J_\C) = J_{\lambda(\C)}$,
\item[(ii)]  $\delta_i(J_\C) = J_{\delta_i(\C)}$, and 
\item[(iii)] $\omega_{m,m'}(J_\C) = J_{\rest(\C,m,m')}$.
\end{itemize}
\end{theorem}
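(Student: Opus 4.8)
The plan is to prove all three identities from a single template. Each of the maps $\lambda$, $\delta_i$, $\omega_{m,m'}$ is surjective (the first two are even ring isomorphisms), so for any set $S\subseteq \F_2[n]$ we have $\phi(\langle S\rangle) = \langle \phi(S)\rangle$. Applying this with $S = \{\rho_v \mid v\notin \C\}$, the generating set of $J_\C$, it suffices to compute $\phi(\rho_v)$ for each $v$, observe that the result is either $0$ or an indicator pseudomonomial $\rho_u$ on the target ring, and then check that the collection of $u$ arising this way from $v\notin\C$ is exactly the set of $u$ lying outside the claimed transformed code. Discarding any zero generators and invoking the bijective correspondence between codes and neural ideals then closes each case.

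For (i), expanding directly gives $\lambda(\rho_v) = \prod_{i\in\supp(v)} x_{\lambda(i)}\prod_{j\notin\supp(v)}(1-x_{\lambda(j)}) = \rho_u$, where $u\in\F_2^n$ is the vector with $\supp(u) = \lambda(\supp(v))$. The assignment $v\mapsto u$ is a bijection of $\F_2^n$ which, by Definition \ref{def:actiononcodes}(i), carries $\C$ onto $\lambda(\C)$, hence carries $\F_2^n\setminus\C$ onto $\F_2^n\setminus\lambda(\C)$; so $\{\lambda(\rho_v)\mid v\notin\C\} = \{\rho_u\mid u\notin\lambda(\C)\}$ and the two ideals agree. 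For (ii), since $\delta_i$ fixes every $x_j$ with $j\neq i$ and swaps $x_i$ with $1-x_i$, the $i$-th factor of $\rho_v$ (which is $x_i$ if $i\in\supp(v)$ and $1-x_i$ otherwise) is replaced by the other one, so $\delta_i(\rho_v) = \rho_u$ with $\supp(u) = \supp(v)\oplus\{i\}$. This $v\mapsto u$ is an involution of $\F_2^n$ carrying $\C$ to $\delta_i(\C)$ by Definition \ref{def:actiononcodes}(ii), and the argument concludes exactly as in (i).

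For (iii), write $\mu = \{m+1,\ldots,m'\}$ and $\sigma = [n]\setminus[m']$, so that $[n] = [m]\sqcup\mu\sqcup\sigma$ and $\omega_{m,m'}$ sends $x_i\mapsto x_i$ for $i\in[m]$, $x_i\mapsto 0$ for $i\in\mu$, and $x_i\mapsto 1$ for $i\in\sigma$. Applying $\omega_{m,m'}$ to $\rho_v$ annihilates the product as soon as some factor becomes $0$, which happens precisely when $\supp(v)\cap\mu\neq\emptyset$ (a factor $x_i$ with $i\in\mu$ occurs) or $\sigma\not\subseteq\supp(v)$ (a factor $1-x_j$ with $j\in\sigma$ occurs); the factors indexed by $\mu\setminus\supp(v)$ and by $\sigma\cap\supp(v)$ each become $1$ and vanish. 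Hence $\omega_{m,m'}(\rho_v) = 0$ unless $\supp(v)\cap([n]\setminus[m]) = \sigma$, in which case the surviving factors are exactly those indexed by $[m]$ and $\omega_{m,m'}(\rho_v) = \rho_u$ for the $u\in\F_2^m$ with $\supp(u) = \supp(v)\cap[m]$; note $\supp(u)\cup\sigma = \supp(v)$. The map $v\mapsto u$ restricts to a bijection from $\{v\mid \supp(v)\cap([n]\setminus[m]) = \sigma\}$ onto $\F_2^m$, and by Definition \ref{def:actiononcodes}(iii) such a $v$ fails to lie in $\C$ if and only if $u\notin\rest(\C,m,m')$. Therefore $\{\omega_{m,m'}(\rho_v)\mid v\notin\C\}\setminus\{0\} = \{\rho_u\mid u\notin\rest(\C,m,m')\}$, which yields the claim.

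The only genuinely delicate point is part (iii): one must correctly determine which generators $\rho_v$ are sent to $0$ and keep the three-way partition $[m]\sqcup\mu\sqcup\sigma$ straight in its interaction with $\supp(v)$; parts (i) and (ii) are routine once the template above is in place, and the fact $\phi(\langle S\rangle) = \langle\phi(S)\rangle$ is exactly where surjectivity of all three maps is used. An alternative would be to invoke Theorem \ref{thm:neuralhomomorphism} so that $\phi(J_\C) = J_\D$ for \emph{some} code $\D$, and then pin down $\D$ using $\V(J_\D) = \D$ together with the behavior of varieties under the dual maps on $\F_2^n$; but the direct computation above seems cleaner and self-contained.
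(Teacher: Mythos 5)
Your proof is correct and follows essentially the same approach as the paper: parts (i) and (ii) are handled identically via the observations $\lambda(\rho_v)=\rho_{\lambda(v)}$ and $\delta_i(\rho_v)=\rho_{\delta_i(v)}$, and part (iii) rests on the same underlying computation, namely determining exactly when $\omega_{m,m'}(\rho_v)$ vanishes and identifying the surviving image with $\rho_u$ where $\supp(u)=\supp(v)\cap[m]$. Your packaging of (iii) is slightly cleaner and more self-contained than the paper's, since you show directly that $\{\omega_{m,m'}(\rho_v)\mid v\notin\C\}\setminus\{0\}=\{\rho_u\mid u\notin\rest(\C,m,m')\}$, whereas the paper establishes a two-way containment of indicator pseudomonomials and appeals to a somewhat implicit fact that a generating set of indicators for a neural ideal must comprise all of its indicators; both arguments are sound.
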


The proof of Theorem \ref{thm:codeeffects} can be found in Section \ref{sec:codeproof}. Finally, we give a geometric interpretation of the behavior of permutation, bit flipping, and restriction maps. We first require a definition.

\begin{definition}\label{def:compatibleregion}
Let $\U = \{U_1,\ldots, U_n\}$ be a collection of sets and let $1\le m\le m' \le n$. The \emph{compatible region} of $(m,m')$ in $\U$ is the set \[
\left(\bigcap_{m'+1\le i \le n}U_i\right)\setminus \left(\bigcup_{m+1\le j\le m'} U_j\right).
\]
\end{definition}
\begin{theorem}\label{thm:geometry}
Let $\C$ be a code with a realization $\U = \{U_1,\ldots, U_n\}$ in a space $X$. 
\begin{enumerate}
\item[(i)]  Let $\lambda$ be a permutation of $[n]$. Then the set $\{U_{\lambda(1)},\ldots,U_{\lambda(n)}\}$ is a realization of $\lambda(\C)$ in $X$. 
\item[(ii)] For any $i$ the collection of sets $\{U_1,\ldots, (X\setminus U_i),\ldots,U_n\}$
is a realization of $\delta_i(\C)$ in $X$. 
\item[(iii)] Let $m$ and $m'$ be integers so that $1\le m\le m' \le n$ and let $X'$ be the compatible region of $(m,m')$ in $\U$. The collection $ \{U_i\cap X' \mid i \in [m] \}$
is a realization of $\rest(\C,m,m')$ in the space $X'$.
\end{enumerate}
\end{theorem}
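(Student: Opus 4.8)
The plan is to prove each of the three statements directly by unwinding the definition of the code of a collection of sets (Definition~\ref{def:codeofsets}) and checking that the proposed collection has the claimed code. In each case I will show that a vector $v$ lies in the code of the new collection if and only if the corresponding vector lies in the image code as given by Definition~\ref{def:actiononcodes}. The common mechanism is to track which atoms of the arrangement are nonempty: a codeword corresponds exactly to a nonempty ``atom'' $\bigcap_{v_i=1}U_i \setminus \bigcup_{v_j=0}U_j$, and each of the three operations on sets either relabels atoms, swaps the roles of $U_i$ and its complement, or intersects everything with a fixed region.

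For (i), the permutation case, I would observe that $\bigl(\bigcap_{v_i=1}U_{\lambda(i)}\bigr)\setminus\bigl(\bigcup_{v_j=0}U_{\lambda(j)}\bigr)$ is literally the same set as $\bigl(\bigcap_{w_k=1}U_k\bigr)\setminus\bigl(\bigcup_{w_\ell=0}U_\ell\bigr)$ where $\supp(w)=\lambda(\supp(v))$, since reindexing the intersection and union by $\lambda$ is a bijection on $[n]$. Hence $v$ is in the code of $\{U_{\lambda(1)},\dots,U_{\lambda(n)}\}$ iff $w\in\C$, which is exactly the condition $v\in\lambda(\C)$ after noting $\supp(v)=\lambda^{-1}(\supp(w))$; a small bookkeeping check reconciles this with Definition~\ref{def:actiononcodes}(i). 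For (ii), the bit flip case, replacing $U_i$ by $X\setminus U_i$ exchanges the condition ``$v_i=1$ so $U_i$ appears in the intersection'' with ``$v_i=1$ so $X\setminus U_i$ appears in the intersection,'' which is the same as requiring $U_i$ to appear in the union; thus the atom indexed by $v$ in the new collection equals the atom indexed by $v\oplus e_i$ in $\U$. So $v$ is in the new code iff $v\oplus e_i\in\C$, i.e. iff $\supp(v)=\supp(c)\oplus\{i\}$ for some $c\in\C$, which is precisely $\delta_i(\C)$. One has to be slightly careful about the empty-intersection/empty-union conventions when $i$ is the only index with $v_i=1$ or $v_i=0$, but these edge cases work out since $X\setminus\emptyset = X$ and the conventions are symmetric.

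Statement (iii) is the most involved and I expect it to be the main obstacle. Here $X' = \bigl(\bigcap_{m'<i\le n}U_i\bigr)\setminus\bigl(\bigcup_{m<j\le m'}U_j\bigr)$ is the compatible region, and the new ambient space is $X'$ rather than $X$, so the empty-intersection convention now produces $X'$, not $X$. The key point is that for $u\in\F_2^m$, the atom of the collection $\{U_i\cap X'\mid i\in[m]\}$ inside the space $X'$, namely $\bigl(\bigcap_{u_i=1}(U_i\cap X')\bigr)\setminus\bigl(\bigcup_{u_j=0,\,j\le m}(U_j\cap X')\bigr)$, equals the atom of $\U$ inside $X$ indexed by the vector $c\in\F_2^n$ with $\supp(c)=\supp(u)\cup\sigma$ where $\sigma=[n]\setminus[m']$: the factor $X'$ already forces membership in every $U_i$ with $i>m'$ and non-membership in every $U_j$ with $m<j\le m'$, which is exactly the contribution of the coordinates of $c$ outside $[m]$. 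I would prove this set equality carefully (it is just distributing the intersection with $X'$ and reading off definitions), and then conclude $u$ is in the code of $\{U_i\cap X'\}$ in $X'$ iff $c\in\C$, which is the defining condition for $u\in\rest(\C,m,m')$. The one genuinely delicate spot is making sure the conventions match up: when $u$ is the all-ones vector on $[m]$, the intersection $\bigcap_{u_i=1}(U_i\cap X')$ over $i\in[m]$ must be reconciled with the convention that for $c$ the empty intersection over $[n]$ would be $X$; but here the intersection over $[n]$ is genuinely nonempty (it includes the coordinates in $\sigma$), so one instead checks directly that $\bigcap_{i\in[m]}(U_i\cap X') = \bigl(\bigcap_{i\le m'}U_i\bigr)\setminus\bigl(\bigcup_{m<j\le m'}U_j\bigr)$, and similarly track the $u$ all-zeros case. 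Once these conventions are pinned down, all three parts follow from the same ``atoms are preserved'' principle.
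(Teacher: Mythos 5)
Your proof takes essentially the same route as the paper's: both work by tracking atoms (what the paper calls \emph{codeword regions} $A^\U_v$) and showing that permutation relabels them, complementing $U_i$ pairs them across a bit flip, and intersecting with the compatible region $X'$ forces exactly the coordinates outside $[m]$, with the empty-intersection convention for the all-zeros vector in part (iii) handled separately just as you flag. One shared wrinkle worth noting: your own computation in part (i) shows that $v$ lies in the code of $\{U_{\lambda(1)},\ldots,U_{\lambda(n)}\}$ iff the vector with support $\lambda(\supp(v))$ lies in $\C$, which by Definition~\ref{def:actiononcodes}(i) as written is the condition $v\in\lambda^{-1}(\C)$ rather than $v\in\lambda(\C)$, so the ``small bookkeeping check'' you defer is actually an off-by-inverse correction (the paper's proof elides the same point; it is harmless since $\lambda^{-1}$ is also a permutation, but it is not merely notational).
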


 The proof of Theorem \ref{thm:geometry} is given in Section \ref{sec:geometryproof}. Together with Theorem \ref{thm:neuralhomomorphism}, Theorem \ref{thm:geometry} allows us to describe any homomorphism preserving neural ideals in terms of operations on the realizations of codes.

We have completely classified all homomorphisms preserving neural ideals and described how they affect the underlying codes. We have also described how these homomorphisms act geometrically on realizations of the associated codes. The correspondence we have described can be summarized as follows.\begin{align*}
\parbox[c][4em][c]{11em}{Permutation maps\\ $\lambda(x_i) =  x_{\lambda(i)}$} && \longleftrightarrow && \parbox[c][4em][c]{10em}{Permuting bits in\\ codewords: $\C\mapsto \lambda(\C)$} && \longleftrightarrow && \parbox[c][4em][c]{10em}{Permuting labels on \\realization: $\U\mapsto \lambda(\U)$}\\
\parbox[c][4em][c]{11em}{Bit flipping maps \\ $\delta_i(x_j) = \begin{cases}1-x_j & j=i; \\ x_j & i\neq j \end{cases}$  } && \longleftrightarrow && \parbox[c][4em][c]{10em}{Flipping $i$-th bit in\\ all codewords of $\C$:\\ $\C\mapsto \delta_i(\C)$} && \longleftrightarrow && \parbox[c][4em][c]{10em}{Replacing $U_i$ with\\
its complement $X\setminus U_i$}\\
\parbox[c][4em][c]{11em}{Restriction maps\\ $\omega_{m,m'}:\F_2[n]\to \F_2[m]$} && \longleftrightarrow && \parbox[c][4em][c]{10em}{Restricting to compatible codewords:\\$\C\mapsto \rest(\C,m,m')$} && \longleftrightarrow && \parbox[c][4em][c]{10em}{Intersecting all sets in realization with the compatible region for $(m,m')$}\\
\end{align*}

The canonical form, an object introduced in \cite{neuralring13},  provides a concise presentation of $J_\C$ and yields information about the underlying code and its realizations. In general it is nontrivial to compute the canonical form of a neural ideal. However, homomorphisms that preserve neural ideals provide a computational shortcut for translating between the related canonical forms.

\begin{definition}\label{def:CF}
Let $J_\C$ be a neural ideal. The \emph{canonical form} of $J_\C$ is the collection of pseudomonomials in $J_\C$ which are minimal with respect to division, and is denoted $\CF(J_\C)$.
\end{definition}

 For any homomorphism $\phi$ preserving neural ideals, the canonical forms  $\CF(J_\C)$ and  $\CF(\phi(J_\C))$ are related as follows.

\begin{theorem}\label{thm:CF}
Let $\phi:\F_2[n]\to \F_2[m]$ be a homomorphism preserving neural ideals. Then $\CF(\phi(J_\C)) \subseteq \phi(\CF(J_\C)) $ for any neural ideal $J_\C$, with equality when $\phi$ is a composition of a permutation and bit flipping map.
\end{theorem}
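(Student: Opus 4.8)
The plan is to exploit Theorem~\ref{thm:neuralhomomorphism}, which lets us write $\phi = \omega\circ\lambda\circ\delta$. Since the containment $\CF(\phi(J_\C)) \subseteq \phi(\CF(J_\C))$ is claimed for arbitrary $\phi$, while equality is claimed only for the permutation/bit-flipping part, I would first handle permutations and bit flips, and then reduce the general containment to the case of a single restriction map by a compositional argument.

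First I would treat permutation maps $\lambda$ and bit flipping maps $\delta$. These are ring \emph{automorphisms} of $\F_2[n]$, and moreover they send pseudomonomials to pseudomonomials bijectively: a permutation $\lambda$ sends $\prod_{i\in\sigma}x_i\prod_{j\in\tau}(1-x_j)$ to $\prod_{i\in\lambda(\sigma)}x_i\prod_{j\in\lambda(\tau)}(1-x_j)$, and $\delta_i$ swaps the role of $i$ between the $\sigma$-part and $\tau$-part. Crucially, such a map respects the divisibility partial order on pseudomonomials: $f \mid g$ iff $\lambda(f)\mid \lambda(g)$, and similarly for $\delta_i$ (divisibility of pseudomonomials is equivalent to containment of the corresponding pairs $(\sigma,\tau)$, which is preserved). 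Therefore the minimal elements of the set of pseudomonomials in $J_\C$ map exactly to the minimal elements of the set of pseudomonomials in $\phi(J_\C)$, giving $\CF(\phi(J_\C)) = \phi(\CF(J_\C))$. Composing two such maps preserves this equality, establishing the "equality" clause.

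Next, for the general containment it suffices, by the decomposition $\phi = \omega\circ\lambda\circ\delta$ and the fact that $\CF((\lambda\circ\delta)(J_\C)) = (\lambda\circ\delta)(\CF(J_\C))$, to prove $\CF(\omega(J)) \subseteq \omega(\CF(J))$ for a restriction map $\omega = \omega_{m,m'}$ and any neural ideal $J$ (take $J = (\lambda\circ\delta)(J_\C)$, which is again a neural ideal by Lemma~\ref{lem:respectingmaps}). By Theorem~\ref{thm:codeeffects}(iii), $\omega(J_\C) = J_{\rest(\C,m,m')}$ is a neural ideal, so $\CF(\omega(J))$ makes sense. Let $g \in \CF(\omega(J))$ be a pseudomonomial minimal under division in $\omega(J)$. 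I would lift $g$ to a pseudomonomial $\tilde g \in J$ with $\omega(\tilde g) = g$ by taking $g = \prod_{i\in\sigma}x_i\prod_{j\in\tau}(1-x_j)$ with $\sigma,\tau\subseteq[m]$ and setting $\tilde g$ to incorporate the "forced" variables: append $[n]\setminus[m']$ to the $\sigma$-part (these map to $1$, so they don't change the image) — wait, that would change the image by a factor; rather, observe $\omega$ kills $x_i$ for $m<i\le m'$ (sends to $0$) and sends $x_i \to 1$ for $m'<i\le n$. The correct lift uses $\tilde g = \prod_{i\in\sigma}x_i\prod_{j\in\tau}(1-x_j)\prod_{k\in[n]\setminus[m']}x_k$, whose image under $\omega$ is $g$ (the extra factors become $1$). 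One must check $\tilde g \in J$: since $g\in\omega(J)$, there is \emph{some} $h\in J$ with $\omega(h)=g$, and because $J$ is generated by pseudomonomials (a pseudomonomial generating set exists), $g$ lies in the ideal generated by the $\omega$-images of those pseudomonomial generators, each of which is a pseudomonomial or $0$; a pseudomonomial lying in an ideal generated by pseudomonomials is divisible by one of them, so $g$ is divisible by $\omega(p)$ for some pseudomonomial generator $p$ of $J$, and then I would show the "canonical lift" $\widetilde{\omega(p)}$ divides $\tilde g$ and lies in $J$ — in fact $p$ itself will work after adjusting. The remaining point is minimality: if $\tilde g$ were not minimal in $J$, say $f \mid \tilde g$ properly with $f\in J$ a pseudomonomial, then $\omega(f)\mid \omega(\tilde g) = g$ and $\omega(f)\in\omega(J)$; one checks $\omega(f)\neq 0$ and either $\omega(f)=g$ (contradicting that $f$ properly divides $\tilde g$, because the canonical lift construction is order-reflecting on the relevant variables) or $\omega(f)$ properly divides $g$, contradicting $g\in\CF(\omega(J))$. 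Hence $\tilde g\in\CF(J)$ and $g = \omega(\tilde g)\in\omega(\CF(J))$.

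The main obstacle I anticipate is the bookkeeping around the restriction map's behavior on divisibility: unlike permutations and bit flips, $\omega$ is neither injective nor does it reflect divisibility in general (distinct pseudomonomials can collapse to the same image, and $x_i\mapsto 0$ destroys information), so the argument that a \emph{minimal} element of $\omega(J)$ lifts to a \emph{minimal} element of $J$ requires carefully choosing the canonical preimage $\tilde g$ — the one with $\sigma$-part, $\tau$-part disjoint from $[m']\setminus[m]$ and containing all of $[n]\setminus[m']$ — and verifying that this particular lift both lands in $J$ and cannot be properly divided within $J$. A secondary subtlety is to exhibit the failure of equality: I would include a brief example (e.g.\ building on Example~\ref{ex:respecting}) where $\omega$ collapses two incomparable canonical-form pseudomonomials to comparable images, so that $\phi(\CF(J_\C))$ strictly contains $\CF(\phi(J_\C))$, showing the "equality when" hypothesis cannot be dropped.
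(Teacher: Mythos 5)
Your overall architecture matches the paper's: reduce to the three basic map types via Theorem~\ref{thm:neuralhomomorphism}, dispatch permutations and bit flips because automorphisms respect the divisibility order on pseudomonomials (giving equality), and work harder for a restriction map $\omega=\omega_{m,m'}$. The automorphism half is fine. But your treatment of the restriction case has two genuine gaps, both stemming from the choice of lift.

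First, your ``canonical lift'' $\tilde g = g\cdot\prod_{k\in[n]\setminus[m']}x_k$ need not lie in $J$, and your sketch of why it should (``$g$ is divisible by $\omega(p)$ for some pseudomonomial generator $p$ of $J$, and then $p$ itself will work after adjusting'') does not close. The paper sidesteps this by lifting instead to $hg$ where
$h = \prod_{m'+1\le i\le n}x_i\,\prod_{m+1\le j\le m'}(1-x_j)$,
and proving $hg\in J$ by decomposing $g=\sum\rho_u$ into indicator pseudomonomials (Lemma~\ref{lem:canopysum}): each summand $h\rho_u$ is an indicator $\rho_v$ with $\supp(v)=\supp(u)\cup\sigma$, and $\rho_v\in J$ exactly because $\rho_u\in\omega(J)$. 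Your $\tilde g$ omits the $(1-x_j)$ factors, so its indicator expansion includes vectors whose bits in the range $(m,m']$ are arbitrary, and there is no reason those indicators should all be generators of $J$; thus $\tilde g\in J$ genuinely can fail.

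Second, even granting $\tilde g\in J$, your minimality argument is incorrect. You claim that if $f\mid\tilde g$ properly with $f\in J$ then $\omega(f)=g$ contradicts proper division ``because the canonical lift construction is order-reflecting.'' It is not: taking $f=g\cdot\prod_{k\in S}x_k$ for a proper subset $S\subsetneq[n]\setminus[m']$ gives $f\mid\tilde g$ properly yet $\omega(f)=g$. So $\tilde g$ need not be in $\CF(J)$. The paper avoids committing to a specific lift at all: having produced \emph{some} pseudomonomial preimage ($hg$), it chooses $\hat f$ to be \emph{minimal under division among all pseudomonomials in $J$ mapping to $g$}. Then if a pseudomonomial $p\in J$ divides $\hat f$, one gets $\omega(p)\neq 0$ (else $\omega(\hat f)=0$), so $\omega(p)$ is a pseudomonomial in $\omega(J)$ dividing $g\in\CF(\omega(J))$, forcing $\omega(p)=g$, and then minimality of $\hat f$ among preimages forces $p=\hat f$. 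This ``minimal preimage'' device is the missing idea your proof needs.

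One smaller note: your closing remark about exhibiting a case where the containment is strict is not required by the theorem statement (the theorem only asserts containment in general and equality for automorphism compositions); it would be a nice remark but is not part of the proof obligation.
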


This result suggests that we can obtain $\CF(\phi(J_\C))$ from $\CF(J_\C)$. In fact we can: if we apply $\phi$ to every pseudomonomial in $\CF(J_\C)$ and select from the result everything which is minimal with respect to division,  the resulting set will be precisely $\CF(\phi(J_\C))$. This result is proven in Section \ref{sec:CFproof}.  

\begin{section}{Classifying homomorphisms that preserve neural ideals}
\label{sec:algebraproof}
In order to establish Theorem \ref{thm:neuralhomomorphism} we first show that permutation, bit flipping, and restriction all preserve neural ideals. To this end, we give a preliminary characterization of homomorphisms that preserve neural ideals.
\begin{lemma}\label{lem:neuralhomomorphism}
Let $\phi:\F_2[n]\to \F_2[m]$ be a homomorphism. Then $\phi$ preserves neural ideals if and only if $\phi$ is surjective and for any pseudomonomial $f\in \F_2[n]$, $\phi(f)$ is either a pseudomonomial or zero.
\end{lemma}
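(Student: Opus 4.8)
The plan is to prove both directions of the biconditional. For the forward direction, suppose $\phi$ preserves neural ideals. First I would establish surjectivity: consider the neural ideal $J_\C$ where $\C = \F_2^n \setminus \{v\}$ for a single vector $v$, so that $J_\C = \langle \rho_v \rangle$ is principal, generated by a single indicator pseudomonomial of degree $n$. By hypothesis $\phi(J_\C) = \langle \phi(\rho_v)\rangle$ must be a neural ideal $J_\D$. The key observation is to understand what the image of the whole ring looks like: the image $\phi(\F_2[n])$ is a subring of $\F_2[m]$ containing all the $\phi(x_i)$, and I want to argue that if $\phi$ is not surjective then some neural ideal fails to map to a neural ideal. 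A cleaner route: take $\C = \F_2^n$, so $J_\C = (0)$, which is not helpful; instead take $\C = \emptyset$, so $J_\C = \langle \rho_v \mid v \in \F_2^n\rangle$, which is the ideal of all functions vanishing nowhere useful --- actually $J_\emptyset$ is the ideal generated by all indicator pseudomonomials, and in $\F_2[n]$ modulo the Boolean relations this is everything, but we are not working modulo those relations. Let me instead use the characterization: an ideal is neural iff it has a pseudomonomial generating set (cited from \cite{thesis}). So $\phi(J_\C)$ neural for all $\C$ forces, in particular for principal $J_\C = \langle \rho_v\rangle$, that $\langle \phi(\rho_v)\rangle$ is neural, hence has a pseudomonomial generating set. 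I would then show $\phi(\rho_v)$ itself must be (up to the ideal) a pseudomonomial or zero, and push this down to individual variables. For surjectivity, I expect the argument to run: the indicator pseudomonomials $\rho_v$ as $v$ ranges over $\F_2^n$ generate (as an ideal, together, or via suitable combinations) enough to recover each $x_i$ up to a constant, and their images must land among pseudomonomials of $\F_2[m]$; tracking degrees and the requirement that we hit all of $\F_2[m]$ as $\C$ varies pins down that $\phi$ is onto.

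For the substantive part --- that $\phi(f)$ is a pseudomonomial or zero for every pseudomonomial $f$ --- I would first reduce to the case of indicator pseudomonomials $\rho_v$, since an arbitrary pseudomonomial $f = \prod_{i\in\sigma}x_i\prod_{j\in\tau}(1-x_j)$ divides some $\rho_v$, and in fact $\rho_v = f \cdot (\text{other pseudomonomial factors})$; so $\phi(\rho_v) = \phi(f)\cdot\phi(\text{rest})$, and if I can control images of the $\rho_v$ and of the "one-variable" pseudomonomials $x_i$ and $1-x_i$, multiplicativity of $\phi$ gives the general case. The cleanest approach: apply the hypothesis to the principal neural ideal $\langle \rho_v\rangle$ for each $v$. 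Its image $\langle\phi(\rho_v)\rangle$ is neural, so it equals $\langle g_1,\dots,g_k\rangle$ for pseudomonomials $g_\ell$. A principal ideal generated by a single element equal to a finitely-generated pseudomonomial ideal --- I would argue this forces $\phi(\rho_v)$ to be a scalar multiple of a single pseudomonomial (hence a pseudomonomial, or zero), using that $\F_2[m]$ is a UFD and pseudomonomials factor into the irreducibles $x_i$ and $1-x_i$. Then, varying $v$, I get that each $\phi(\rho_v)$ is a pseudomonomial or zero; combining this with $\rho_v = x_i \cdot \rho_{v'}$-type factorizations (where $v'$ agrees with $v$ except possibly reinterpreted) and unique factorization, I can deduce that $\phi(x_i)$ and $\phi(1-x_i) = 1 - \phi(x_i)$ are each pseudomonomials or zero --- which, since they sum to $1$, forces each $\phi(x_i) \in \{0, 1, x_k, 1-x_k : k\in[m]\}$. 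From that, $\phi$ of any pseudomonomial is visibly a product of such things, hence a pseudomonomial or zero.

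For the reverse direction, suppose $\phi$ is surjective and sends pseudomonomials to pseudomonomials or zero. Given a neural ideal $J_\C = \langle \rho_v \mid v\notin\C\rangle$, the image $\phi(J_\C)$ is generated (as an ideal in $\F_2[m]$, using surjectivity so that $\phi(\langle S\rangle) = \langle \phi(S)\rangle$) by the set $\{\phi(\rho_v) \mid v\notin\C\}$, each element of which is a pseudomonomial or zero. Dropping the zeros, $\phi(J_\C)$ has a pseudomonomial generating set, so by the characterization from \cite{thesis} it is a neural ideal. Here the one point needing care is that $\phi(\langle S\rangle) = \langle \phi(S)\rangle$ genuinely requires surjectivity (for a non-surjective ring map the image of an ideal need not be an ideal), so I would state that lemma explicitly. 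I expect the \textbf{main obstacle} to be the forward direction's deduction that $\phi(\rho_v)$ is forced to be a single pseudomonomial (not merely contained in a pseudomonomial ideal): I need to rule out, e.g., $\phi(\rho_v)$ being a sum of two pseudomonomials that happens to generate a pseudomonomial ideal. The resolution should come from unique factorization in $\F_2[m]$ together with the fact that a principal ideal $\langle h\rangle$ equals a pseudomonomial ideal $\langle g_1,\dots,g_k\rangle$ only if $h$ is associate to a common divisor structure forcing $h$ itself to be a pseudomonomial --- spelling this out carefully is where the real work lies.
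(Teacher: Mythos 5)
Your reverse direction matches the paper's exactly, and the core insight in your forward direction—that a principal ideal $\langle h\rangle$ equal to a pseudomonomial ideal forces $h$ to be a pseudomonomial or zero, since $h$ divides each pseudomonomial generator and divisors of pseudomonomials are pseudomonomials—is correct and is essentially the paper's argument (stated contrapositively there: if $\phi(f)$ is not a pseudomonomial then $\langle\phi(f)\rangle$ contains no pseudomonomials, and the only such neural ideal is zero). However, there are two genuine problems.

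First, your surjectivity argument never gets off the ground. You consider $\C = \emptyset$, correctly identify that $J_\emptyset$ is generated by all indicator pseudomonomials, and then dismiss it with the claim that this is everything only ``modulo the Boolean relations, but we are not working modulo those relations.'' That claim is false: the identity $\sum_{v\in\F_2^n}\rho_v = \prod_{i=1}^n\bigl(x_i + (1-x_i)\bigr) = 1$ holds in $\F_2[n]$ with no relations imposed, so $J_\emptyset = \F_2[n]$ outright. This is exactly the paper's route—$\F_2[n]$ is a neural ideal containing $1$, so its image is a neural ideal containing $\phi(1)=1$, hence all of $\F_2[m]$—and having dismissed it, your fallback (``I expect the argument to run\ldots tracking degrees\ldots pins down that $\phi$ is onto'') is not a proof.

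Second, your detour through indicator pseudomonomials creates an avoidable gap. Having (correctly) concluded that each $\phi(\rho_v)$ is a pseudomonomial or zero, you want to push down to variables and then back up: you claim $\phi(x_i)\in\{0,1,x_k,1-x_k\}$ and then that ``$\phi$ of any pseudomonomial is visibly a product of such things, hence a pseudomonomial or zero.'' That last step fails: $x_k\cdot(1-x_k)$ is a product of such things and is neither a pseudomonomial nor zero. Ruling this out requires the uniqueness statement (for each $j\in[m]$ there is a \emph{unique} $i$ with $\phi(x_i)\in\{x_j,1-x_j\}$), which is the paper's Lemma \ref{lem:lineartolinear}—a separate degree-counting argument you have not supplied. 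The detour is unnecessary in any case: $\langle f\rangle$ is a neural ideal for \emph{any} pseudomonomial $f$ (it has a pseudomonomial generating set, namely $\{f\}$), not just for indicators, so you can apply your principal-ideal argument directly to $\langle f\rangle$ and conclude $\phi(f)$ is a pseudomonomial or zero in one step, exactly as the paper does.
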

\begin{proof}
Let $\phi$ be a homomorphism which preserves neural ideals. To see that $\phi$ is surjective, note that $\F_2[n]$ is a neural ideal containing 1, so its image must also be a neural ideal containing 1.  Now, let $f\in \F_2[n]$ be a pseudomonomial and recall that $\langle f \rangle$ is a neural ideal,  and $\phi(\langle f\rangle) = \langle \phi(f)\rangle$. Suppose that $\phi(f)$ is not a pseudomonomial. Every factor of a pseudomonomial is again a pseudomonomial, so $\langle \phi(f)\rangle$ does not contain any pseudomonomials. The only neural ideal containing no pseudomonomials is the zero ideal and so $\phi(f) = 0$. 

Conversely, let $J_\C$ be a neural ideal in $\F_2[n]$. Note that since $\phi$ is surjective $\phi(J_\C)$ is an ideal of $\F_2[m]$. Recall that $J_\C = \langle \rho_v \mid v\notin \C\rangle$, and so we will have that \[
\phi(J_\C) = \langle \phi(\rho_v)\mid v\notin \C\rangle.
\]
Removing the $\phi(\rho_v)$ which are zero from the set of generators above we obtain a generating set for $\phi(J_\C)$ consisting only of pseudomonomials. This implies that $\phi(J_\C)$ is a neural ideal and the result follows.
\end{proof}

\begin{lemma}\label{lem:respectingmaps}
Permutation maps, bit flipping maps, and restriction maps all preserve neural ideals.
\end{lemma}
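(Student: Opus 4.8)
The plan is to verify the criterion from Lemma \ref{lem:neuralhomomorphism}: each of the three map types is surjective, and each sends any pseudomonomial to either a pseudomonomial or zero. Surjectivity is immediate in all three cases --- permutation maps and bit flips are bijective on variables, and the restriction map $\omega_{m,m'}$ hits every $x_i$ with $i \le m$ --- so the substance of the proof is the pseudomonomial condition.

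For a pseudomonomial $f = \prod_{i\in\sigma}x_i\prod_{j\in\tau}(1-x_j)$, I would track what each map does factor by factor. A permutation $\lambda$ simply relabels: $\lambda(f) = \prod_{i\in\sigma}x_{\lambda(i)}\prod_{j\in\tau}(1-x_{\lambda(j)})$, which is the pseudomonomial with index sets $\lambda(\sigma)$ and $\lambda(\tau)$ (still disjoint since $\lambda$ is a bijection). A bit flip $\delta_k$ replaces $x_k$ by $1-x_k$ and, crucially, $1-x_k$ by $1-(1-x_k) = x_k$ (arithmetic in $\F_2$, or just expanding); so $\delta_k(f)$ is again a pseudomonomial, with $k$ moved from $\sigma$ to $\tau$ if $k\in\sigma$, from $\tau$ to $\sigma$ if $k\in\tau$, and nothing changed if $k\notin\sigma\cup\tau$. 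Disjointness of the new index sets is preserved because $\sigma\cap\tau=\emptyset$ to begin with and $k$ lands in only one of them. Composing such maps (a general bit flipping map) therefore also preserves the class of pseudomonomials.

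For the restriction map $\omega_{m,m'}$, I would split the index sets as $\sigma = \sigma_1 \sqcup \sigma_2 \sqcup \sigma_3$ and $\tau = \tau_1 \sqcup \tau_2 \sqcup \tau_3$ according to whether an index lies in $[m]$, in $\{m+1,\dots,m'\}$, or in $\{m'+1,\dots,n\}$. Then $\omega_{m,m'}(f)$ is a product in which: the $\sigma_1$-factors contribute $\prod_{i\in\sigma_1}x_i$; the $\tau_1$-factors contribute $\prod_{j\in\tau_1}(1-x_j)$; each factor $x_i$ with $i\in\sigma_2$ maps to $0$; each factor $x_i$ with $i\in\sigma_3$ maps to $1$; each factor $(1-x_j)$ with $j\in\tau_2$ maps to $1-0=1$; and each factor $(1-x_j)$ with $j\in\tau_3$ maps to $1-1=0$. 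Hence if $\sigma_2\neq\emptyset$ or $\tau_3\neq\emptyset$ the whole product is $0$; otherwise $\omega_{m,m'}(f) = \prod_{i\in\sigma_1}x_i\prod_{j\in\tau_1}(1-x_j)$, a pseudomonomial on $\F_2[m]$ with disjoint index sets $\sigma_1,\tau_1\subseteq[m]$ (disjoint because $\sigma\cap\tau=\emptyset$). Invoking Lemma \ref{lem:neuralhomomorphism} then finishes each case.

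I do not anticipate a genuine obstacle here; the only thing requiring a little care is the $\F_2$-arithmetic fact that $1-(1-x) = x$ and $1-0 = 1$, $1-1 = 0$, which is what makes bit flips and the restriction map close up on the pseudomonomial class rather than producing stray polynomials. The mild bookkeeping point worth stating explicitly is that disjointness of the index sets is never violated, since in every case the new $\sigma$ and $\tau$ are obtained from the old ones by relabeling, by moving a single index across, or by deleting indices --- none of which can create an overlap.
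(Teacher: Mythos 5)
Your proposal is correct and takes essentially the same approach as the paper: verify surjectivity and the pseudomonomial-or-zero condition from Lemma \ref{lem:neuralhomomorphism} for each of the three map types. Your factor-by-factor bookkeeping with the explicit index-set partitions $\sigma_1,\sigma_2,\sigma_3,\tau_1,\tau_2,\tau_3$ is a bit more detailed than the paper's terser observation that $\omega_{m,m'}(x_i)\in\{0,1,x_i\}$ and $\omega_{m,m'}(1-x_j)\in\{0,1,1-x_j\}$, but the argument is the same.
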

\begin{proof}
Note that permutation maps and bit flipping maps are both automorphisms of $\F_2[n]$, and so are surjective. The image of a pseudomonomial under a permutation map is again a pseudomonomial since we are simply permuting the variables. Under a bit flipping map $\delta_i$ 
any occurrence of $x_i$ in a pseudomonomial will be changed to $(1-x_i)$ and vice versa, while the remaining factors are unchanged, so the image of any pseudomonomial is still a pseudomonomial.
  Thus permutation maps and bit flipping maps satisfy the conditions of Lemma \ref{lem:neuralhomomorphism} and must preserve neural ideals.

Let $\omega_{m,m'}: \F_2[n]\to \F_2[m]$ be a restriction map and $f\in \F_2[n]$ a pseudomonomial. Note that $\omega_{m,m'}(x_i)\in \{0,1,x_i\}$ for all $i$, and $\omega_{m,m'}(1-x_j) \in \{0,1,1-x_j\}$ for all $j$. Hence applying $\omega_{m,m'}$ to $f$ yields either zero (if one of the factors vanishes) or another pseudomonomial (when all factors map to either $1$ or themselves). Furthermore $\omega_{m,m'}$ is surjective since it acts as the identity on $\F_2[m]$ as a subring of $\F_2[n]$. 
Thus by Lemma \ref{lem:neuralhomomorphism} the map $\omega_{m,m'}$ preserves neural ideals.
\end{proof}

Next we describe how a homomorphism that preserves neural ideals affects variables in $\F_2[n]$. This will be critical in decomposing homomorphisms that preserve neural ideals into a composition of the three basic homomorphisms. 

\begin{lemma}\label{lem:lineartolinear}\label{lem:lineartolinearstronger}
Let $\phi:\F_2[n]\to \F_2[m]$ be a map preserving neural ideals. Then\begin{itemize}
\item[(i)]  for each $j\in [m]$ there exists a unique $i\in [n]$ so that $\phi(x_i) \in \{x_j,1-x_j\}$, and
\item[(ii)] for all $i\in[n]$ we have $\phi(x_i) \in \{0,1,x_j,1-x_j\}$.
\end{itemize} 
\end{lemma}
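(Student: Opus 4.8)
The plan is to exploit Lemma~\ref{lem:neuralhomomorphism}, which tells us that $\phi$ is surjective and sends pseudomonomials to pseudomonomials or zero. Since each $x_i$ is itself a pseudomonomial (with $\sigma = \{i\}$, $\tau = \emptyset$), and $1-x_i$ is a pseudomonomial as well, the image $\phi(x_i)$ must be a pseudomonomial or zero. The first task is to argue that the \emph{only} pseudomonomials that can occur as $\phi(x_i)$ are among $\{0,1,x_j,1-x_j\}$ for a single index $j$; this will give (ii). Then (i) will follow from surjectivity together with a dimension/counting argument on the set of available indices.

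For part (ii), suppose $\phi(x_i)$ is a nonzero pseudomonomial $f = \prod_{a\in\sigma}x_a\prod_{b\in\tau}(1-x_b)$. I would use the fact that in $\F_2[n]$ we have $x_i^2 = x_i$ is \emph{not} an identity in the polynomial ring, but rather consider the Boolean relation: actually the cleaner route is to note that $x_i(1-x_i) = x_i - x_i^2$, which is not zero in $\F_2[x_1,\dots,x_n]$, so that approach needs care. Instead, the key observation is that $\phi(x_i)$ and $\phi(1-x_i) = 1 - \phi(x_i)$ must \emph{both} be pseudomonomials or zero. If $f = \phi(x_i)$ is a pseudomonomial of degree $\ge 2$, then $1 - f$ is generally not a pseudomonomial — one must check that $1 - f$ has no pseudomonomial factorization, e.g.\ by examining its value/support structure or by a degree and constant-term argument (a pseudomonomial of positive degree has zero constant term when expanded appropriately, or conversely $1-f$ fails to factor as a product of linear pseudomonomial factors). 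This forces $\deg f \le 1$, i.e.\ $f \in \{x_j, 1-x_j\}$ for some $j$, or $f$ is the constant $1$; together with the possibility $\phi(x_i) = 0$, this yields (ii). The main obstacle here is pinning down rigorously why $1-f$ fails to be a pseudomonomial when $\deg f \ge 2$ — I would handle this by evaluating on $\F_2^n$ or by a direct irreducibility-type argument, since a product $\prod x_a \prod(1-x_b)$ with at least two factors, when subtracted from $1$, cannot itself be written as such a product (one can see this by substituting suitable $0/1$ values to the variables to get a contradiction on the number of roots).

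For part (i), fix $j \in [m]$. By surjectivity there is some polynomial $g$ with $\phi(g) = x_j$. Writing $g$ in terms of the generators $x_1,\dots,x_n$ and using that $\phi$ is a ring homomorphism determined by the values $\phi(x_i) \in \{0,1,x_k,1-x_k\}$, the image $\phi(\F_2[n])$ is the subring generated by these values. For $x_j$ to lie in this subring, some $\phi(x_i)$ must actually involve the variable $x_j$, i.e.\ $\phi(x_i) \in \{x_j, 1-x_j\}$ — otherwise every $\phi(x_i)$ lies in $\F_2[\,[m]\setminus\{j\}\,]$ and so does the whole image, contradicting $x_j \in \phi(\F_2[n])$. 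This gives existence of $i$. For uniqueness, suppose $\phi(x_i), \phi(x_{i'}) \in \{x_j, 1-x_j\}$ with $i \ne i'$. Then consider the pseudomonomial $x_i(1-x_{i'})$ (or $x_i x_{i'}$, adjusting for the bit-flip signs): its image is, up to the flips, $x_j(1-x_j)$, which is not a pseudomonomial and not zero in $\F_2[x_1,\dots,x_m]$ — contradicting Lemma~\ref{lem:neuralhomomorphism}. Hence $i$ is unique. The one subtlety to check is that the relevant product of $x_i, x_{i'}$ and their complements really does map to something of the form $x_j(1-x_j)$ or $x_j \cdot x_j$ type expression that is provably not a pseudomonomial; in $\F_2$, $x_j \cdot x_j = x_j^2 \ne x_j$ as polynomials, and $x_j(1-x_j) = x_j - x_j^2 \ne 0$, so in either case the image is a nonzero non-pseudomonomial, completing the argument.

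I expect the main obstacle to be the careful verification in part (ii) that a pseudomonomial $f$ of degree $\ge 2$ has $1-f$ not a pseudomonomial; the rest is bookkeeping with surjectivity and Lemma~\ref{lem:neuralhomomorphism}. A clean way to dispatch it: a nonzero pseudomonomial $f$ vanishes on a proper nonempty subset of $\F_2^n$ (those $v$ with $v_a = 1$ for $a \in \sigma$ and $v_b = 0$ for $b\in\tau$ — wait, it is nonzero exactly there); more precisely $f$, as a function $\F_2^n \to \F_2$, takes value $1$ on a single ``box'' and $0$ elsewhere, and a pseudomonomial of degree $d$ is nonzero on a set of size $2^{n-d}$. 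Then $1-f$ is nonzero on $2^n - 2^{n-d}$ points, which equals $2^{n-e}$ for some pseudomonomial degree $e$ only when $d = 1$ (giving $2^{n-1}$). This pins down $\deg f \le 1$ immediately and makes (ii) clean.
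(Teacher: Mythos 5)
Your proof is correct, but it takes a genuinely different route from the paper's. You prove part (ii) first by a local argument: since $x_i$ and $1-x_i$ are both pseudomonomials, both $\phi(x_i)$ and $1-\phi(x_i)$ must be pseudomonomials or zero by Lemma~\ref{lem:neuralhomomorphism}, and your evaluation/point-counting on $\F_2^m$ (a pseudomonomial of degree $d$ takes the value $1$ on exactly $2^{m-d}$ points, and $2^m - 2^{m-d}$ is a power of $2$ only when $d\le 1$) cleanly rules out $\deg\phi(x_i)\ge 2$. You then derive part (i) from (ii) plus surjectivity (for existence) and a separate product argument mapping $x_i x_{i'}$ or its sign-adjusted variant to $x_j^2$ or $x_j(1-x_j)$, neither of which is a pseudomonomial or zero (for uniqueness). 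The paper instead runs a single global degree argument: surjectivity forces at least $m$ of the $\phi(x_i)$ to be non-constant; applying $\phi$ to the squarefree monomial over those indices gives a pseudomonomial in $\F_2[m]$ whose degree is at least $m$ but also at most $m$; hence there are exactly $m$ non-constant images, each of degree exactly $1$, and the $m$ distinct linear factors must hit each $x_j$ exactly once. The paper's route is more compact and establishes (i) and (ii) simultaneously from one product; yours proves (ii) before (i), is more hands-on, and makes the underlying combinatorics of pseudomonomials-as-indicator-functions explicit, which is pedagogically useful even if slightly longer. Both are valid; when you write it up, state the counting claim in $\F_2^m$ (not $\F_2^n$) and, in the uniqueness step, be explicit about which of $x_i x_{i'}$, $x_i(1-x_{i'})$, etc.\ you pick in each of the four sign cases so that the image is manifestly $x_j^2$ or $x_j(1-x_j)$ up to units.
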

\begin{proof}
The set $\{x_i\mid i\in [n]\}$ generates $\F_2[n]$ as an algebra, and since $\phi$ is surjective the set $\{\phi(x_i)\mid i\in [n]\}$ must generate $\F_2[m]$. This implies that the set \[
\sigma = \{i\in [n]\mid \phi(x_i) \text{ is not constant}\}
\]
has at least $m$ elements. Consequently the pseudomonomial\[
\phi\left(\prod_{i\in \sigma} x_i\right) = \prod_{i\in \sigma} \phi(x_i)  
\]
has degree at least $m$. But no pseudomonomial in $\F_2[m]$ can have degree larger than $m$ and so this pseudomonomial has exactly $m$ linear factors. Each $\phi(x_i)$ must be exactly one of these factors, and so for each $j\in [m]$ we see that there is a unique $i\in[n]$ for which $\phi(x_i) \in \{x_j,1-x_j\}$. The remainder of the $x_i$'s map to constants, but the only constants are $0$ and $1$ and so the second statement of the lemma follows.
\end{proof}

With Lemma \ref{lem:lineartolinearstronger} in hand we prove Theorem \ref{thm:neuralhomomorphism}.

\begin{proof}[Proof of Theorem \ref{thm:neuralhomomorphism}]
 If $\phi$ is a composition of restriction, permutation, and bit flipping maps then by Lemma \ref{lem:respectingmaps} it preserves neural ideals.
To prove the converse, let $\phi:\F_2[n]\to \F_2[m]$ be a homomorphism that preserves neural ideals. We will decompose $\phi$ directly as the composition of a permutation, bit flipping map, and restriction map. First define four sets of indices:\begin{align*}
\alpha_0 &= \{i\in [n] \mid \phi(x_i) = 0\},&\quad
\beta_0 &= \{i\in [n]\mid \phi(x_i) = x_j\},\\
\alpha_1 &= \{i\in [n]\mid \phi(x_i) = 1\}, &\quad
\beta_1 &= \{i\in [n]\mid \phi(x_i) = 1-x_j\}.
\end{align*}
Note by Lemma \ref{lem:lineartolinear} that these four sets completely partition $[n]$. Now define a permutation, bit flipping map, and restriction map as follows:\begin{itemize}
\item Let $\delta:\F_2[n]\to\F_2[n]$ be the bit flipping map which replaces $x_i$ by $1-x_i$ for all $i\in \beta_1$. 
\item Let $\lambda$ to be a permutation of $n$ which maps $\alpha_0$ to the set of indices between $m+1$ and $m+|\alpha_0|$ inclusive, maps $\alpha_1$ to the set of indices between $m+|\alpha_0|+1$ and $n$ inclusive, and maps $i$ to $j$ where $\phi(x_i)\in \{x_j,1-x_j\}$ for all other indices. 
\item Let $\omega:\F_2[n]\to\F_2[m]$ be the restriction map which sends $x_i$ to zero whenever $m+1\le i\le m+|\alpha_0|$ and to one whenever $m+|\alpha_0|+1\le i\le n$. 
\end{itemize} 
We claim that $\phi = \omega\circ \lambda\circ \delta$. To prove this it suffices to show that the image of all variables in $\F_2[n]$ under $\omega\circ \lambda\circ \delta$ is the same as under $\phi$. We consider four cases. \begin{itemize}
\item If $i\in \alpha_0$ then the image of $x_i$ is 0 since $\delta$ does not affect $x_i$, while by construction $\lambda$ maps $x_i$ to an index which is sent to 0 by $\omega$.
\item If $i\in \alpha_1$ then the image of $x_i$ is 1. Again $\delta$ does not affect $x_i$, and $\lambda$ maps $x_i$ to a variable mapped to 1 by $\omega$.
\item If $i\in \beta_0$ then $\delta$ does not affect it. The map $\lambda$ then sends $x_i$ to $\phi(x_i) = x_j$ by construction, and $\omega$ leaves the result unchanged. Thus the image of $x_i$ is exactly $\phi(x_i)$ as desired.
\item Finally, if $i\in \beta_1$ then $\delta(x_i) = 1-x_i$. Applying $\lambda,$ we obtain $1-x_j$ which by construction is $\phi(x_i)$. Applying $\omega$ has no affect on this quantity and so again the image of $x_i$ is exactly $\phi(x_i)$.
\end{itemize}
Since the image under $\omega\circ \lambda\circ \delta$ of each variable is the same as its image under $\phi$ we conclude that $\phi =\omega\circ \lambda\circ \delta$, and the result follows.
\end{proof}

\begin{corollary}\label{cor:idealpmpreimage}
Suppose that $\phi:\F_2[n]\to \F_2[m]$ is a map preserving neural ideals. Then for every neural ideal $J_\D$ in $\F_2[m]$ there exists a neural ideal $J_\C$ in $\F_2[n]$ so that $\phi(J_\C) = J_\D$.
\end{corollary}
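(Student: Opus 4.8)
The plan is to use the explicit decomposition $\phi = \omega\circ\lambda\circ\delta$ furnished by Theorem~\ref{thm:neuralhomomorphism} and to handle the three factors in turn, building a preimage neural ideal from $J_\D$ by pulling back through each factor. Since bit flipping maps and permutation maps are automorphisms of $\F_2[n]$, they are bijective, so the only factor that genuinely needs attention is the restriction map $\omega = \omega_{m,m'}$; for the other two we can simply take the preimage ideal and invoke Theorem~\ref{thm:codeeffects}. More precisely, if we can produce, for every neural ideal $J_\D$ in $\F_2[m]$, a neural ideal $J_\C$ in $\F_2[n]$ with $\omega_{m,m'}(J_\C) = J_\D$, then the full statement follows: given $J_\D$ in $\F_2[m]$, first find $J_\C$ with $\omega(J_\C)=J_\D$, then set $J_{\C'} = \lambda^{-1}(J_\C)$ and $J_{\C''} = \delta^{-1}(J_{\C'})$, each of which is a neural ideal by Theorem~\ref{thm:codeeffects}(i),(ii) (using that $\lambda^{-1}$ and $\delta^{-1}$ are themselves a permutation map and a bit flipping map), and then $\phi(J_{\C''}) = \omega(\lambda(\delta(J_{\C''}))) = \omega(\lambda(J_{\C'})) = \omega(J_\C) = J_\D$.

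For the restriction map, the natural candidate is to take $\C$ to be the ``cylinder'' over $\D$: given $\D\subseteq \F_2^m$, let $\sigma = [n]\setminus[m']$ and define
\[
\C \od \{\, c\in \F_2^n \mid \supp(c)\cap[m]\in \{\supp(u) : u\in \D\},\ \supp(c)\cap([m']\setminus[m]) = \emptyset,\ \sigma\subseteq\supp(c)\,\}.
\]
In words, $\C$ consists of all codewords whose first $m$ coordinates form a codeword of $\D$, whose coordinates $m+1,\dots,m'$ are all $0$, and whose coordinates $m'+1,\dots,n$ are all $1$. This $\C$ is designed precisely so that $\rest(\C,m,m') = \D$: unwinding Definition~\ref{def:actiononcodes}(iii), a vector $u\in\F_2^m$ lies in $\rest(\C,m,m')$ iff $\supp(u)\cup\sigma = \supp(c)$ for some $c\in\C$, and by construction such $c$ exists iff $u\in\D$. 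Then Theorem~\ref{thm:codeeffects}(iii) gives $\omega_{m,m'}(J_\C) = J_{\rest(\C,m,m')} = J_\D$, as required.

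The only step requiring care — the ``main obstacle,'' though it is mild — is verifying that $\rest(\C,m,m') = \D$ from the definition of $\C$, since one must check both inclusions and make sure the conditions on coordinates $m+1,\dots,m'$ (forced to $0$) and $m'+1,\dots,n$ (forced to $1$) interact correctly with the symmetric-difference/union bookkeeping in Definition~\ref{def:actiononcodes}(iii); concretely, for $c\in\C$ one has $\supp(c) = (\supp(u)\cap[m])\cup\sigma$ for the associated $u\in\D$, so $\supp(c)\setminus\sigma \subseteq [m]$ and equals $\supp(u)$, which gives the forward inclusion, and conversely any $u\in\D$ lifts to the codeword $c$ with $\supp(c) = \supp(u)\cup\sigma$, which lies in $\C$ and witnesses $u\in\rest(\C,m,m')$. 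Once this identification is in place, the corollary follows immediately by composing the three preimage constructions as described above, and no further computation is needed.
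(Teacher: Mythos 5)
Your proof is correct, and it takes a genuinely different route from the paper's. The paper argues entirely at the level of pseudomonomial generators: it first observes that under each of the three basic maps every pseudomonomial in $\F_2[m]$ has a pseudomonomial preimage (for permutations and bit flips because the inverse is again a map of the same type, and for restrictions because a pseudomonomial in $\F_2[m]\subseteq\F_2[n]$ is its own preimage since $\omega_{m,m'}$ fixes $\F_2[m]$ pointwise), then lifts a pseudomonomial generating set $\{f_1,\ldots,f_k\}$ of $J_\D$ to pseudomonomials $\{\hat{f}_1,\ldots,\hat{f}_k\}$ in $\F_2[n]$ and takes $J_\C = \langle\hat{f}_1,\ldots,\hat{f}_k\rangle$. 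You instead work at the level of codes: you construct an explicit ``cylinder'' code $\C$ over $\D$ and verify combinatorially that $\rest(\C,m,m')=\D$, then invoke Theorem~\ref{thm:codeeffects} to conclude $\omega(J_\C)=J_\D$, finally pulling back through $\lambda$ and $\delta$ via their inverses. Both approaches lean on the decomposition from Theorem~\ref{thm:neuralhomomorphism}, but the paper's route only needs Lemma~\ref{lem:respectingmaps} and the fact that pseudomonomials generate neural ideals, whereas yours also routes through Theorem~\ref{thm:codeeffects}(iii). The paper's argument is therefore a bit leaner and more self-contained; yours has the compensating virtue of exhibiting the code $\C$ explicitly, which makes the statement more concrete and illustrates the geometric content (the cylinder construction is exactly the preimage you would expect from the realization picture in Theorem~\ref{thm:geometry}).
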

\begin{proof}
We first show that every pseudomonomial $f$ has a preimage $\hat{f}$ under $\phi$ such that $\hat{f}$ is a pseudomonomial. By Theorem \ref{thm:neuralhomomorphism} it suffices to prove this for bit flipping maps, permutations, and restrictions. For bit flipping maps and permutations this is clear since they are automorphisms whose inverses also preserve neural ideals. For restrictions, the pseudomonomial itself serves as its own preimage. 

Now let $\{f_1,\ldots, f_k\}$ be a generating set of pseudomonomials for $J_\D$. We can find pseudomonomials $\{\hat{f}_1,\ldots,\hat{f}_k\}$ in $\F_2[n]$ so that $\phi(\hat{f}_i) = f_i$, and letting $J_\C$ be the neural ideal generated by $\{\hat{f}_1,\ldots, \hat{f}_k\}$ we see that $\phi(J_\C) = J_\D$. 
\end{proof}

\end{section}

\begin{section}{The effects of homomorphisms on codes}

\label{sec:codeproof}

Given that we can decompose any homomorphism preserving neural ideals into three basic building blocks (permutation, bit flipping, and restriction) it is natural to ask how these building blocks affect the underlying codes of neural ideals. In particular, if $\phi(J_\C) = J_\D$, how are the codes $\C$ and $\D$ related in terms of the decomposition of $\phi$? Theorem \ref{thm:codeeffects} gives a complete answer to this question. 

\begin{proof}[Proof of Theorem \ref{thm:codeeffects}]
For any $v\in \F_2^n$ define $\lambda(v)$ to be the vector whose support is $\lambda(\supp(v))$. Then note that $\lambda(\rho_v) = \rho_{\lambda(v)}$ for any vector $v$ and permutation $\lambda$. With this we compute directly that \[
\lambda(J_\C) = \langle \rho_{\lambda(v)} \mid v\notin \C\rangle = \langle \rho_v \mid v\notin \lambda(\C)\rangle =  J_{\lambda(\C)}.
\]
For the bit flipping map $\delta_i$ we may define $\delta_i(v)$ to be the vector whose support is $\supp(v)\oplus \{i\}$ so that $\delta_i(\C) = \{\delta_i(c)\mid c\in \C\}$. Then note that $\delta_i(\rho_v) = \rho_{\delta_i(v)}$ and so we can compute \[
\delta_i(J_\C) = \langle \rho_{\delta_i(v)} \mid v\notin \C\rangle = \langle \rho_v \mid v\notin \delta_i(\C)\rangle = J_{\delta_i(\C)}.
\]
Finally, let $\omega_{m,m'}$ be a restriction map and let $\sigma = [n]\setminus [m']$. For simplicity we write $\omega$ for $\omega_{m,m'}$ and let $\D = \rest(\C,m,m')$. We then wish to show that $\omega(J_\C) = J_\D$. 
Recall from Definition \ref{def:actiononcodes} that $u\in \D$ if and only if $\supp(u)\cup \sigma \in \C$.  We will argue that $\omega(J_\C)$ and $J_\D$ contain the same indicator pseudomonomials. 
  Let $\rho_u\in J_\D$ be an indicator pseudomonomial, and let $v$ be the $n$-bit vector whose support is $\supp(u)\cup \sigma$. Note that since $u\notin \D$ we have $v\notin \C$ by the definition of restriction, and hence $\rho_v\in J_\C$. By construction we have $\omega(\rho_v) = \rho_u$, and so $\rho_u\in \omega(J_\C)$.

For the reverse inclusion, observe that since  $\{\rho_v \mid v\notin \C\}$ is a generating set for $J_\C$ the set $\{\omega(\rho_v)\mid v\notin \C\}$ generates $\omega(J_\C)$. All nonzero polynomials in $\{\omega(\rho_v)\mid v\notin \C\}$ will be indicator pseudomonomials in $\F_2[m]$, since applying $\omega$ removes all variables with indices greater than $m$ while leaving those with smaller indices fixed. Each neural ideal is uniquely associated to its generating set of indicator pseudomonomials, and so we conclude that the nonzero elements of $\{\omega(\rho_v)\mid v\notin \C\}$ comprise \emph{all} indicator pseudomonomials in $\omega(J_\C)$.  Thus if $\rho_u\in \omega(J_\C)$ is an indicator pseudomonomial then $\rho_u = \omega(\rho_v)$ for some $\rho_v\in J_\C$. But the only indicator $\rho_v$ whose image under $\omega$ is $\rho_u$ arises from the vector $v$ whose support is $\supp(u)\cup \sigma$. Any other indicator will either disagree on a variable $x_i$ with $i\in[m]$, or it will vanish under $\omega$. Thus if $\rho_u\in \omega(J_\C)$ then $\supp(u)\cup \sigma\notin \C$, which implies that $u\notin \D$ by the definition of restriction. Then $\rho_u\in J_\D$, as desired. Having proven that $\omega(J_\C)$ and $J_\D$ are generated by the same indicator pseudomonomials we conclude that they are equal and the overall result follows.
\end{proof}

\end{section}

\begin{section}{Geometric interpretation of homomorphisms preserving neural ideals}\label{sec:geometry} 

\label{sec:geometryproof}
Finally we give a geometric characterization of permutation maps, bit flipping maps, and restriction maps. Theorem \ref{thm:codeeffects} allows us to translate the behavior of these homomorphisms into the world of neural codes, which we then interpret geometrically in Theorem \ref{thm:geometry}.

\begin{definition}\label{def:codewordregion}
Let $\U = \{U_1,\ldots,U_n\}$ be a collection of sets and $v\in\F_2^n$ be a binary vector. Then the \emph{codeword region} of $v$ in $\U$ is the set\[
A^\U_v \od \left(\bigcap_{v_i=1} U_i\right) \setminus \bigcup_{v_j = 0} U_j.
\]
\end{definition}

The codeword region is the set of points where all the sets $U_i$ for $i\in\supp(v)$ are present, and no others. Note that with this definiton we can write $\C(\U) = \{v\in \F_2^n\mid A^\U_v \neq \emptyset\}$. That is, $\C(\U)$ is the collection of vectors $v$ so that the codeword region $A^\U_v$ is nonempty. This notation is used heavily in the following proof to simplify the translation between a collection of sets and its associated code.

\begin{proof}[Proof of Theorem \ref{thm:geometry}]
Let $\U = \{U_1,\ldots, U_n\}$ be a realization of the code $\C$. The codeword region for $\lambda(v)$ in the realization $\{U_{\lambda(1)}, \ldots, U_{\lambda(n)}\}$ is exactly $A^\U_v$ since we have simply permuted the indices of $\U$. We conclude that the code of the collection $\{U_{\lambda(1)}, \ldots, U_{\lambda(n)}\}$ is $\{\lambda(v)\mid v\in \C\}$, as desired. For bit flipping maps, observe that the codeword region $A^\U_v$ is exactly the codeword region for the vector whose support is $\supp(v)\oplus \{i\}$ in the realization $\{U_1,\ldots, X\setminus U_i ,\ldots, U_n\}$ since taking the set difference with $U_i$ is the same as intersecting with its complement, and intersecting with $U_i$ is the same as taking the difference with its complement. Hence $u$ is in the code of $\{U_1,\ldots, X\setminus U_i ,\ldots, U_n\}$ if and only if $\supp(u)= \supp(c)\oplus \{i\} $ for  some $c\in \C$ and the result follows.

Finally we show that $\{U_i\cap X'\mid i\in[m]\}$ is a realization of $\rest(\C,m,m')$. For convenience let $\V = \{U_i\cap X'\mid i\in[m]\}$ and $\sigma = [n]\setminus [m']$ as in Definiton \ref{def:actiononcodes}. We then consider the region $A^\V_v$ for all vectors $v\in \F_2^m$. If $v = 00\cdots 0$ then the empty intersection in the definition of $A^\V_v$ will be all of $X'$ since we are considering this realization in the space $X'$. The region $A^\V_v$ is nonempty if and only if $X'$ is not covered by $\{U_i\mid i\in [m]\}$, i.e., if and only if $\supp(c) = \sigma$ for $c \in \C$. Hence in this case we have $v\in \C(\V)$ if and only if $\supp(v)\cup\sigma \in \C$. When $v$ is nonzero we compute directly that \begin{align*}
A^\V_v &= \left(\bigcap_{i\in \supp(v)}(U_i\cap X')\right)\setminus \bigcup_{j\in [m]\setminus\supp(v)} (U_j\cap X')\\
&= \left(\bigcap_{i\in\supp(v)\cup \sigma} U_i\right)\setminus \bigcup_{j\in [n]\setminus(\supp(v)\cup\sigma)} U_j.
\end{align*}
In this case we also see that $v\in \C(\V)$ if and only if $\supp(v)\cup\sigma \in \C$. By definition this implies that $\C(\V)=\rest(\C,m,m')$, proving the result.
\end{proof}

This geometric interpretation allows us to specify certain classes of maps which preserve convex neural ideals.

\begin{corollary}\label{cor:preservingconvexity}
Permutation maps preserve convex neural ideals. When $m=m'$ the restriction map $\omega_{m,m'}$ preserves convex neural ideals.
\end{corollary}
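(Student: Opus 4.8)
The plan is to prove each claim by combining Theorem~\ref{thm:geometry} with the definition of a convex code. Recall that $\C$ is a convex code precisely when it has a realization $\U$ consisting of convex open sets in some $\R^d$, and that a neural ideal $J_\C$ is a \emph{convex neural ideal} exactly when $\C$ is a convex code. A map $\phi$ preserving neural ideals preserves convex neural ideals if, whenever $J_\C$ is convex, $\phi(J_\C) = J_\D$ is again convex; by Theorem~\ref{thm:codeeffects} the code $\D$ is $\lambda(\C)$, $\delta_i(\C)$, or $\rest(\C,m,m')$ according to the type of $\phi$, so it suffices to show that the corresponding code transformation takes convex codes to convex codes.

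First I would handle permutation maps. Suppose $\C$ is convex with a realization $\U = \{U_1,\ldots,U_n\}$ by convex open sets in $\R^d$. By Theorem~\ref{thm:geometry}(i), the collection $\{U_{\lambda(1)},\ldots,U_{\lambda(n)}\}$ is a realization of $\lambda(\C)$; this collection consists of the very same sets, merely relabeled, so it again consists of convex open sets in $\R^d$. Hence $\lambda(\C)$ is convex, so $\lambda$ preserves convex neural ideals.

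Next I would handle the restriction map $\omega_{m,m'}$ in the case $m = m'$. Again take a convex open realization $\U = \{U_1,\ldots,U_n\}$ of $\C$ in $\R^d$. When $m = m'$ the compatible region of $(m,m')$ in $\U$ is $X' = \bigcap_{m+1\le i\le n} U_i$ (the union over $m+1\le j\le m'$ is empty). This is a finite intersection of convex open sets, hence itself convex and open (possibly empty, in which case $\rest(\C,m,m')$ is either empty or $\{0\cdots0\}$, both trivially convex). By Theorem~\ref{thm:geometry}(iii), $\{U_i\cap X' \mid i\in[m]\}$ is a realization of $\rest(\C,m,m')$ in the space $X'$. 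Each $U_i\cap X'$ is an intersection of convex open sets, hence convex and open relative to $\R^d$. Since $X'$ is itself convex and open in $\R^d$, these sets are a convex open realization living in Euclidean space, so $\rest(\C,m,m')$ is a convex code and $\omega_{m,m'}$ preserves convex neural ideals.

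The only real subtlety — and the step I would be most careful about — is confirming that Theorem~\ref{thm:geometry}(iii)'s realization genuinely witnesses convexity in the sense required by Definition~\ref{def:codeofsets}: the sets $U_i\cap X'$ are convex and open as subsets of $\R^d$, and the ambient space $X'$ is itself a convex open subset of $\R^d$, so this is a legitimate convex realization. (By contrast, for general $m < m'$ the compatible region involves removing the sets $U_j$ for $m< j\le m'$, which destroys convexity, which is why the corollary is stated only for $m = m'$.) No analogous statement is made for bit flipping maps, since replacing $U_i$ by its complement $X\setminus U_i$ need not yield a convex set, so nothing is claimed there.
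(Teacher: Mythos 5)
Your proof is correct and follows essentially the same route as the paper: cite Theorem~\ref{thm:geometry} and Theorem~\ref{thm:codeeffects}, observe that permutation merely relabels the convex open sets of a realization, and note that when $m=m'$ the compatible region $X'$ and each $U_i\cap X'$ are finite intersections of convex open sets, hence convex and open. The extra remarks you add (the empty-$X'$ edge case, the explanation of why $m<m'$ fails) are sound but not needed.
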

\begin{proof}
Relabelling sets in a realization does not affect convexity, and so permutation maps necessarily preserve convex neural ideals. For a restriction map $\omega$ with $m=m'$, we consider the realization given by Theorem \ref{thm:geometry}. If we begin with a convex realization $\U$ of $\C$ then the compatible region $X'$ is simply the intersection of a finite number of convex open sets and hence is itself convex and open. Likewise, all $U_i\cap X'$ are convex and open, so  $\{U_i\cap X'\mid i\in[m]\}$ is a convex realization of $\rest(\C,m,m')$. By Theorem \ref{thm:codeeffects} we have $\omega(J_\C) = J_{\rest(\C,m,m')}$ and so $\omega(J_\C)$  is a convex neural ideal.
\end{proof}

\begin{example}
Figure \ref{fig:geometryexample} shows a code $\C$ on four bits along with a convex realization $\U=\{U_1,U_2,U_3,U_4\}$ in the plane. We can apply Theorem \ref{thm:geometry} to obtain a realization of the code $\rest(\C,2,2)$ in the space $X' = U_3\cap U_4$. As observed in Corollary \ref{cor:preservingconvexity}, this new realization consists of convex sets in a convex space, and hence $\rest(\C,2,2)$ is a convex code.
\end{example}
\begin{figure}[h]
\[\includegraphics[width=28em]{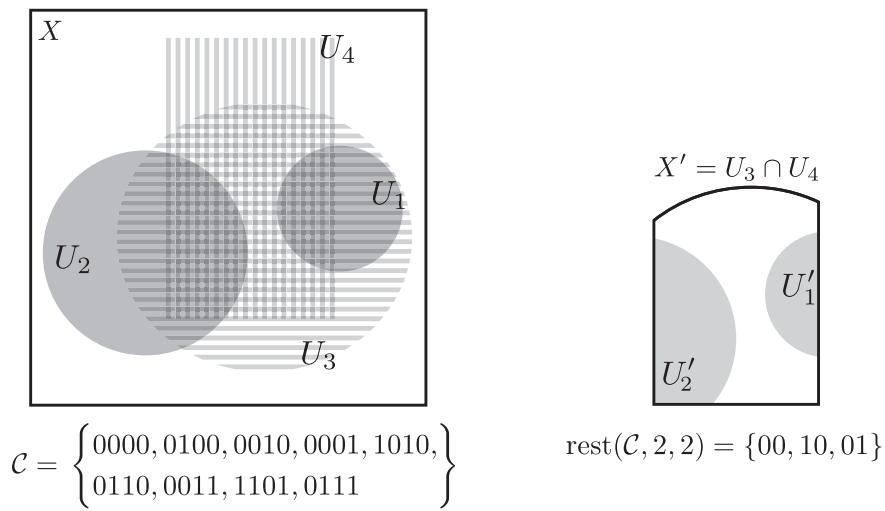}\]
\caption{A convex realization of a code $\C$, and a convex realization of $\rest(\C,2,2)$ as given by Theorem \ref{thm:geometry} and Corollary \ref{cor:preservingconvexity}.}
\label{fig:geometryexample}
\end{figure}
 
Bit flipping maps are the least well behaved among maps preserving neural ideals in terms of respecting convexity of the underlying codes. Indeed, taking the complement of a convex set almost never yields another convex set. Thus in the context of convex codes, bit flipping does not play a natural role. 

It is also important to note that Corollary \ref{cor:preservingconvexity} does not completely describe when convexity is preserved. Corollary \ref{cor:preservingconvexity} gives us a guarantee about when $\phi(J_\C)$ corresponds to a convex code \emph{for all} convex codes $\C\subseteq \F_2^n$, but there may be cases when $J_\C$ and $\phi(J_\C)$ both correspond to convex codes for other homomorphisms $\phi$. For example, it is possible that $\delta_i(J_\C) = J_\C$ for a convex code $\C$. 

\end{section}

\begin{section}{Homomorphisms and the canonical form}

\label{sec:CFproof}

Lastly we prove Theorem \ref{thm:CF}. As a preliminary ingredient we show that every pseudomonomial can be expressed as a sum of indicator pseudomonomials.

\begin{lemma}\label{lem:canopysum}
Let $f$ be a pseudomonomial. Then $f$ can be written as a sum of indicator pseudomonomials.
\end{lemma}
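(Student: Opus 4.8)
The plan is to induct on the number of linear factors that a pseudomonomial $f = \prod_{i\in\sigma}x_i\prod_{j\in\tau}(1-x_j)$ is "missing," i.e. on $n - |\sigma| - |\tau|$. If this quantity is zero then $\sigma\cup\tau = [n]$ and $\sigma\cap\tau = \emptyset$, so $f$ is already an indicator pseudomonomial, namely $\rho_v$ where $\supp(v) = \sigma$; this is the base case. For the inductive step, suppose $n - |\sigma| - |\tau| > 0$ and pick some index $k \in [n]\setminus(\sigma\cup\tau)$. Over $\F_2$ we have the identity $1 = x_k + (1-x_k)$, so multiplying $f$ by this gives
\[
f = f\cdot x_k + f\cdot(1-x_k),
\]
and each of the two summands is a pseudomonomial with one more linear factor than $f$ (the left one has $\sigma$ replaced by $\sigma\cup\{k\}$, the right one has $\tau$ replaced by $\tau\cup\{k\}$). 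Both summands have smaller missing-factor count, so by the inductive hypothesis each is a sum of indicator pseudomonomials, and hence so is $f$.

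The one point requiring a little care is bookkeeping over $\F_2$: the expansion may produce the same indicator pseudomonomial more than once, but since we are working in characteristic $2$ a repeated term either cancels in pairs or survives once, so the final expression is still literally a sum (with coefficients in $\F_2$) of indicator pseudomonomials — this is harmless and worth a one-line remark. I do not anticipate any real obstacle here; the only thing to state cleanly is the induction parameter and the splitting identity $1 = x_k + (1-x_k)$, which is where the field being $\F_2$ (so that $-1 = 1$) is used. An alternative, non-inductive phrasing would be to expand $f = \prod_{i\in\sigma}x_i\prod_{j\in\tau}(1-x_j)\cdot\prod_{k\notin\sigma\cup\tau}\bigl(x_k + (1-x_k)\bigr)$ directly and observe that distributing the product yields exactly the sum of $\rho_v$ over all $v$ with $\sigma\subseteq\supp(v)\subseteq[n]\setminus\tau$; I would likely present the induction since it is cleaner to write out, but either works.
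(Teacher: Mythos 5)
Your induction on $n-|\sigma|-|\tau|$ and the splitting identity $f = x_k f + (1-x_k)f$ are exactly the paper's argument (the paper inducts on $n-\deg(f)$, which is the same quantity). The remark about possible cancellation over $\F_2$ is harmless but unnecessary: your own alternative phrasing shows the expansion yields $\sum\rho_v$ over the distinct $v$ with $\sigma\subseteq\supp(v)\subseteq[n]\setminus\tau$, so no collisions occur.
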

\begin{proof}
We proceed by induction on $n-\deg(f)$. Note that we have $n-\deg(f)\ge 0$ since pseudomonomials never have degree larger than $n$. In the base case that $n-\deg(f) = 0$ we have that $\deg(f) = n$ so $f$ is an indicator and the result follows trivially. If $n-\deg(f) >0$ then there is some variable $x_i$ on which $f$ does not depend. Then we may apply the inductive hypothesis to the pseudomonomials $x_if$ and $(1-x_i)f$. Noticing that $f = x_if + (1-x_i)f$ we obtain the desired result.
\end{proof}

\begin{proof}[Proof of Theorem \ref{thm:CF}]
By Theorem \ref{thm:neuralhomomorphism} it suffices to prove the theorem for permutations, bit flipping maps, and restrictions. In the case of permutations and bit flipping maps the result is clear since these are automorphisms of $\F_2[n]$ and so they provide a bijective correspondence between pseudomonomials that are minimal with respect to division. Thus if $\phi$ is a composition of bit flipping and permutation maps then $\CF(\phi(J_\C)) = \phi(\CF(J_\C))$. 

This leaves the case of a restriction map $\omega$ with parameters $1\le m\le m'\le n$.  In this case let $f\in \CF(\phi(J_\C))$ and write $f = \sum \rho_v$ as guaranteed by Lemma \ref{lem:canopysum}. Recall from the proof of Theorem \ref{thm:codeeffects} that  $\rho_v\in \omega(J_\C)$ if and only if $\rho_u\in J_\C$ where $\supp(u) = \supp(v)\cup \sigma$.  Defining a pseudomonomial \[
h = \prod_{m'+1\le i \le n} x_i \prod_{m+1\le j\le m'} (1-x_j)
\]
we see that it is equivalent to state that $\rho_v\in \omega(J_\C)$ if and only if $h\rho_v\in J_\C$. We conclude that  $\sum h\rho_v$ is in $J_\C$, noting that this is simply the pseudomonomial $hf$. Furthermore we can observe that $\omega(hf) = f$, and so we have shown that there is some pseudomonomial in $J_\C$ which maps to $f$. 

Among all pseudomonomials in $J_\C$ that map to $f$, choose $\hat{f}$ to be minimal with respect to division. We claim that the pseudomonomial $\hat{f}$ is in $\CF(J_\C)$. If $g|\hat{f}$ for a pseudomonomial $g\in J_\C$ then $\omega(g)$ must divide $f$. Since $f\in \CF(\omega(J_\C))$ we see that $\omega(g) = f$. By the minimality of $\hat{f}$ we have $g=\hat{f}$, and so $\hat{f}\in \CF(J_\C)$. We have shown that every pseudomonomial $f$ in  $\CF(\omega(J_\C))$ is also in $\omega(\CF(J_\C))$ and so the result follows. 
\end{proof}
\end{section}

\bibliographystyle{plain}
\bibliography{references}

\end{document}